\numberwithin{equation}{section}
\newtheorem{thm}{Theorem}[section]
\newtheorem{lem}[thm]{Lemma}
\newtheorem{pro}[thm]{Proposition}
\newtheorem{cor}[thm]{Corollary}
\newtheorem{dfn}[thm]{Definition}
\newtheorem{eg}[thm]{Example}
\newcommand{\ben}{\begin{enumerate}}
\newcommand{\een}{\end{enumerate}}
\newcommand{\bi}{\begin{itemize}}
\newcommand{\ei}{\end{itemize}}
\DeclareMathOperator{\dom}{dom}
\DeclareMathOperator{\Par}{\sf Par}
\newenvironment{proof}{\noindent \textbf{Proof.}\hspace{.7em}}
                   {\hfill $\Box$
                    \vspace{10pt}}
\newcommand{\ol}{\overline}
\newcommand{\mc}{\mathcal}
\begin{document}

\title{Override and restricted union for partial functions}
\author{Tim Stokes}

\date{}
\maketitle

\begin{abstract}
The {\em override} operation $\sqcup$ is a natural one in computer science, and has connections with other areas of mathematics such as hyperplane arrangements.  For arbitrary functions $f$ and $g$, $f\sqcup g$ is the
function with domain $\dom(f)\cup \dom(g)$ that agrees with $f$ on $\dom(f)$ and with $g$ on $\dom(g) \backslash \dom(f)$.  
Jackson and the author have shown that there is no finite axiomatisation of algebras of functions of signature $(\sqcup)$.  But adding operations (such as {\em update}) to this minimal signature can lead to finite axiomatisations.  For the functional signature $(\sqcup,\backslash)$ where $\backslash$ is set-theoretic difference, Cirulis has given a finite equational axiomatisation as subtraction o-semilattices.  Define $f\curlyvee g=(f\sqcup g)\cap (g\sqcup f)$ for all functions $f$ and $g$; this is the largest domain restriction of the binary relation $f\cup g$ that gives a partial function.  Now $f\cap g=f\backslash(f\backslash g)$ and $f\sqcup g=f\curlyvee(f\curlyvee g)$ for all functions $f,g$, so the signatures $(\curlyvee)$ and $(\sqcup,\cap)$ are both intermediate between $(\sqcup)$ and $(\sqcup,\backslash)$ in expressive power.  We show that each is finitely axiomatised, with the former giving a proper quasivariety and the latter the variety of associative distributive o-semilattices in the sense of Cirulis.
\end{abstract}

\noindent{\bf Keywords:} Function, override, intersection, axiomatisation.
\medskip

\noindent{\bf 2020 Mathematics Subject Classification:} 08A02, 06F99, 20M20.

\section{Introduction}   \label{sec:Intro}

Let $X$ and $Y$ be sets, and let $\Par(X,Y)$ denote the set of all partial functions from $X$ to $Y$; these are the functions with domain in $X$ and range in $Y$. 

In the theory of function algebras, one seeks to axiomatise algebras of (partial) functions under one or more operations or relations.  The elements of such an algebra are members of $\Par(X,Y)$ for some fixed $X,Y$, and the operations are from some given signature.  If composition is in the signature, it is normally assumed that $X=Y$.

Recall the following familiar purely set-theoretic operations on $\Par(X,Y)$:
\bi
\item intersection $\cap$, defined as the usual set-theoretic intersection of (the graphs of) $f$ and $g$;
\item difference $\backslash$, defined to be the usual set-theoretic difference of $f$ and $g$:
\[
f\backslash g=\{(x,y)\mid (x,y)\in f,(x,y)\not\in g\}.
\]
\ei

For function algebras, the signature of intersection gives nothing but semilattices, and difference gives so-called subtraction algebras (which have appeared in different forms in the literature, for example implication algebras \cite{abb1,abb2} and implicative BCK-algebras \cite{impl}).  

We are here mainly interested in two functional analogs of the set operation of union, which, unlike intersection and difference, do not have purely set-theoretic definitions.  To define them, if $f,g\in \Par(X,Y)$, denote by $g-f$ the restriction of $g$ to the complement of the domain of $f$ (a notation used in \cite{berendsen}).  Then we may define, for all functions $f,g\in \Par(X,Y)$,
\bi
\item {\em override} $\sqcup$, given by 
\[
f\sqcup g=f\cup (g-f);
\]
\item restricted union $\curlyvee$, given by
\[
f\curlyvee g=(f-g)\cup (f\cap g)\cup (g-f).
\]
\ei
Each of $f\sqcup g$ and $f\curlyvee g$ is a disjoint union of its constituent parts and hence gives a function when applied to two functions.  
Indeed, it is easy to see that $f\curlyvee g$ is the domain restriction of the binary relation $f\cup g$ to where it is single-valued (that is, a function).  Notably, $\sqcup$ is associative but not commutative, and $\curlyvee$ is the opposite.  Both are idempotent.

The {\em override} operation has received considerable attention in theoretical computer science and mathematics, especially when taken in combination with other operations such as $\cap$ and others; see \cite{berendsen}, \cite{CLS}, \cite{modrest} and \cite{overup}, as well as the earlier works \cite{LeechSBA} and \cite{NSL}.  (The symbol $\sqcup$ was used in \cite{modrest} and \cite{overup}, but other notations are often used.)  It is shown in \cite{modrest} that {\em override} has a close connection to the {\em if-then-else} construct.  As discussed in \cite{cirulis}, the operation is also of interest in the setting of so-called flat records, which are modelled as (partial) functions on a finite domain, and it also arises as the (opposite of the) overriding operator $\oplus$ in Z \cite{spivey}.  Finally, there are interesting connections with hyperplane arrangements \cite{MSS}.  Restricted union was defined and briefly discussed in Section 3.3 of \cite{modrest}, where it was noted to be equivalent in expressive power to {\em override} in the presence of intersection, but as far as the author knows, it has not been considered since.

In some previous work, such as \cite{berendsen} and \cite{CLS}, a different operation which is often called intersection but is based on the idea of domain restriction (and studied at least since the time of \cite{sch70}) was either part of the signature considered or derivable within it; we do not consider this operation  here.  The original motivation for the current work came from interest in functional signatures having actual intersection together with some functional approximant of union such as $\sqcup$ or $\curlyvee$.

There are straightforward connections between some of the signatures just discussed.  For example, $\cap$ can be expressed in terms of $\backslash$, as noted in the abstract.  But the two notions of union are also linked: $f\sqcup g=f\curlyvee(f\curlyvee g)$ (see Proposition \ref{curlyveeprops} to follow), and $f\curlyvee g=(f\sqcup g)\cap (g\sqcup f)$ (see Proposition \ref{veevee}), so $\curlyvee$ contains some information about intersection as well as $\sqcup$.  It follows that $\{\cap,\curlyvee\}$ is equivalent in expressive power to $\{\cap,\sqcup\}$, and that an axiomatisation of the functional algebras of one signature easily gives an axiomatisation of functional algebras of the other.

Note also that $\curlyvee$ is strictly more expressive than $\sqcup$: any collection $S$ of more than one partial function in $\Par(X,Y)$, all members of which are defined everywhere on $X$, will be such that $(S,\sqcup)$ is a left zero semigroup, but $s\curlyvee t$ will not be in $S$ unless $s=t$.  Moreover, $\curlyvee$ alone is strictly less expressive than $\cap$ plus $\curlyvee$ (equivalently, $\cap$ plus $\sqcup$), as a simple example shows.  Let $X=\{1,2,3,0\}$ with $S=\{f,g,h,\emptyset\}$ with 
\[
f=\{(1,0),(2,0)\}, g=\{(2,0),(3,0)\}, h=\{(1,0),(2,0),(3,0)\}.
\]  
Then $S$ is closed under $\curlyvee$, since $h=f\curlyvee g$, with all other cases of $\curlyvee$ trivial since for any other pair of elements, one is a subset of the other.  But of course $S$ is not closed under intersection since $f\cap g\not\in S$.  So functional algebras and semigroups closed under $\curlyvee$ are more general than those closed under both $\curlyvee$ and $\cap$.  

The signature $(\sqcup)$ is considered in \cite{overup}, where the functional algebras are shown to have no finite axiomatisation, and an infinite quasiequational axiomatisation is given.  By contrast, in Section 5 of \cite{cirulis}, Cirulis axiomatises functional algebras of signature $(\sqcup,\backslash)$, as the finitely based variety of subtraction o-semilattices (see Theorem 5.6 there).  The two signatures we consider here lie between these two known cases in terms of expressive power.

In \cite{cirulis}, the author gives a structural description of the functional algebras having the signature $(\sqcup,\cap)$, but does not axiomatise them.  We do this here, showing them to be nothing but the variety of associative distributive o-semilattices defined in \cite{cirulis}.  It is straightforward to then enrich the signature to include difference and recover the finite equational axiomatisability result due to Cirulis in \cite{cirulis}, albeit with a different (though necessarily equivalent) equational axiomatisation to the one given in Theorem 5.6 there. But we also finitely axiomatise the signature $(\curlyvee)$, giving a proper quasivariety.  This covers off every signature consisting of some or all of the four functional operations $(\sqcup,\curlyvee,\cap,\backslash)$.  

A partial operation of domain-disjoint union of sets and partial functions is considered in \cite{H+Mc}, motivated by separation logic.  It was shown there that no finite axiomatisation for either set-based or functional algebras exists (though again, an axiomatisation was given).  This is consistent with the situation for the signature $(\sqcup)$ (noting that $\sqcup$ agrees with domain-disjoint union when the latter is defined).  
But it is perhaps surprising that the signature $(\curlyvee)$ admits a finite axiomatisation, since, like $\sqcup$, it agrees with the partial operation of union when the latter exists; it seems that the intersection-related information present in the definition of $\curlyvee$ is enough to allow this. 

In what follows, if $\Sigma$ is a particular signature comprising some of the operations $\sqcup,\curlyvee,\cap,\backslash$ considered above, we say an equational or quasiequational law in the signature $\Sigma$ is {\em functionally sound} if it is satisfied by $\Par(X,Y)$ for any non-empty set $X$; it follows that the law is satisfied by any subalgebra (under the relevant signature) of $\Par(X,Y)$.  We say an algebra of signature $\Sigma$ is {\em functional} if it is isomorphic to one whose elements are partial functions, and the operations have their standard functional interpretations as described above. 

In the section to follow, we begin by considering properties of {\em override} alone, noting that it is a left regular band operation and recording some basic properties of such bands that will be useful in what follows.  In Section \ref{sec:algebras}, we consider the associative distributive o-semilattices considered in \cite{cirulis}, where their defining laws are observed to be functionally sound.  We call these {\em ado-semilattices}, and reformulate their axioms somewhat for later use.  We  define $\curlyvee$-algebras in terms of some laws satisfied by $\curlyvee$ defined in an ado-semilattice via $a\curlyvee b=(a\sqcup b)\cap (b\sqcup a)$; it follows that every functional algebra of signature $(\curlyvee)$ is a $\curlyvee$-algebra.

Then in Section \ref{sec:complete}, we present our completeness proofs.  Most of the work is in showing that the class of $\curlyvee$-algebras axiomatises the functional algebras of signature $(\curlyvee)$, which is also shown to be a proper quasivariety.  We then build on this to prove that the finitely based variety of ado-semilattices axiomatises the functional algebras of signature $(\sqcup,\cap)$.  We conclude in Section \ref{sec:diff} by obtaining as a corollary a different finite equational axiomatisation for the functional signature $(\sqcup,\backslash)$.

Throughout, we write functions on the right of their arguments, so ``$xf$", rather than ``$f(x)$".

\section{Left regular bands}  \label{sec:lrb}

The operation of $\sqcup$ alone is rather badly behaved in terms of axiomatisation of functional algebras: by 
Corollary 4.1 of \cite{overup}, the class of functional algebras of signature $(\sqcup)$ has no finite axiomatisation in first order logic.  Indeed it is shown there to consist of precisely those algebras that arise from hyperplane arrangements, and is the proper quasivariety generated by the three-element band $\{1,e,f\}$ in which $1$ is an identity element appended to the left zero semigroup on $\{e,f\}$.  It was also shown in \cite{overup} that the addition to the signature of the operation of {\em update} as defined there and in earlier work including \cite{berendsen} and \cite{CLS} leads to a finite equational axiomatisation, answering a question posed in that earlier work.  It was shown in \cite{berendsen} that adding the operation {\em minus} to {\em override} (yielding a signature that subsumes {\em update}) gives a finite axiomatisation, at least for the equational theory generated by the functional algebras.  The more general finite axiomatisability of the functional algebras follows from the work of \cite{CLS}, itself reliant on the work of \cite{LeechSBA} on skew Boolean algebras.  We shall see very similar behavior for the cases considered here, in which the signature of $\sqcup$ is enriched in various natural ways that give finite axiomatisations.

We next note note some quite elementary properties satisfied by the {\em override} operation that will be useful in what follows.
  
Recall that $(S,\sqcup)$ is a {\em left regular band} if for all $a,b,c\in S$,
\ben 
\item $a\sqcup(b\sqcup c)=(a\sqcup b)\sqcup c$;
\item $a\sqcup a=a$;
\item $a\sqcup b=(a\sqcup b)\sqcup a$.
\een

The fact that the left regular band properties are all satisfied by {\em override} for partial functions has been noted by several authors; see \cite{berendsen} and \cite{cirulis}.  Note that in the first of these, the operation of {\em override} is denoted $\rhd$, whilst in the latter, it is denoted $\lhd$ but defined opposite to the way most other authors (including the current one) define it.

On the left regular band $S$, define
\bi
\item $a\leq b$ if $a\sqcup b=b$.
\item $a\lesssim b$ if $b\sqcup a=b$;
\ei
For the case of partial functions, $\leq$ is the partial order of set-theoretic inclusion of the sets of ordered pairs representing them, and $\lesssim$ is the domain inclusion quasiorder: $f\lesssim g$ if and only if $\dom(f)\subseteq \dom(g)$.  Note also that $\dom(f\sqcup g)=\dom(f)\cup \dom(g)$.

Most of the facts noted in the following are well-known, but good sources are \cite{sal} and \cite{mss}.

\begin{lem} \label{useful}
Let $S$ be a left regular band.
\ben
\item The relation $\leq$ is a partial order on $S$, and for all $a,b,c\in S$, if $b\leq c$ then $a\sqcup b\leq a\sqcup c$.
\item The relation $\lesssim$ is a quasiorder and for all $a,b,c,d\in S$,  if $a\lesssim b,c\lesssim d$ then $a\sqcup c\lesssim b\sqcup d$.
\item For all $a,b\in S$, $a\sqcup b\sim b\sqcup a$.
\item For all $a,b\in S$, $a\leq b$ implies $a\lesssim b$. 
\item If $d\lesssim a_1\sqcup i_1, a_2\sqcup i_2,\ldots,a_n\sqcup i_n$ in $S$, then letting $i=i_1\sqcup i_2\sqcup \ldots\sqcup i_n$, we have that $d\lesssim a_j\sqcup i$ for $j=1,2,\ldots, n$.
\een
\end{lem}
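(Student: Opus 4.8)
The plan is to dispatch the five items in turn, using only the three defining laws of a left regular band --- associativity~(1), idempotence~(2), and $a\sqcup b=(a\sqcup b)\sqcup a$~(3) --- together with one convenient instance of~(3): for any $x\in S$ and any $p\in S$ (in practice $p$ will be a product of several elements), $x\sqcup p\sqcup x=x\sqcup p$. I will record this \emph{absorption identity} first, since items~(1),~(2) and~(5) all lean on it; everything else is routine reassociation. Throughout, $a\sim b$ abbreviates ``$a\lesssim b$ and $b\lesssim a$''.

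\emph{Items (1) and (4).} Reflexivity of $\leq$ is~(2); transitivity follows from $a\sqcup c=a\sqcup(b\sqcup c)=(a\sqcup b)\sqcup c=b\sqcup c=c$ when $a\leq b\leq c$; and antisymmetry follows from the observation that $a\sqcup b=b$ forces, by~(3), $b\sqcup a=(a\sqcup b)\sqcup a=a\sqcup b=b$, so if also $b\sqcup a=a$ then $a=b$. The monotonicity clause is $(a\sqcup b)\sqcup(a\sqcup c)=a\sqcup b\sqcup a\sqcup c=(a\sqcup b)\sqcup c=a\sqcup(b\sqcup c)=a\sqcup c$, using $b\sqcup c=c$ and the absorption identity to delete the interior $a$. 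Item~(4) is exactly the computation just used: $a\sqcup b=b$ implies $b\sqcup a=(a\sqcup b)\sqcup a=a\sqcup b=b$, i.e.\ $a\lesssim b$.

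\emph{Items (2) and (3).} For~(2), reflexivity and transitivity of $\lesssim$ are immediate: from $c\sqcup b=c$ and $b\sqcup a=b$ one gets $c\sqcup a=(c\sqcup b)\sqcup a=c\sqcup(b\sqcup a)=c\sqcup b=c$. For the monotonicity clause, assume $b\sqcup a=b$ and $d\sqcup c=d$; the goal $(b\sqcup d)\sqcup(a\sqcup c)=b\sqcup d$ I split as $((b\sqcup d)\sqcup a)\sqcup c$. First, substituting $b=b\sqcup a$ and using the absorption identity on $a\sqcup d\sqcup a$, $(b\sqcup d)\sqcup a=(b\sqcup a)\sqcup d\sqcup a=b\sqcup(a\sqcup d\sqcup a)=b\sqcup a\sqcup d=b\sqcup d$; then $(b\sqcup d)\sqcup c=b\sqcup(d\sqcup c)=b\sqcup d$, and composing gives the claim. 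For~(3), $(b\sqcup a)\sqcup(a\sqcup b)=b\sqcup a\sqcup a\sqcup b=b\sqcup a\sqcup b=b\sqcup a$ (idempotence, then~(3)), which says $a\sqcup b\lesssim b\sqcup a$; the reverse inequality holds by symmetry, so $a\sqcup b\sim b\sqcup a$.

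\emph{Item (5).} For each $j$, the absorption identity applied with $x=i_j$ and $p$ equal to the product of the remaining $i_k$ (and~(2) in the corner case where that product is empty) gives $i\sqcup i_j=i$, i.e.\ $i_j\lesssim i$. Then the monotonicity of~(2), together with $a_j\lesssim a_j$, yields $a_j\sqcup i_j\lesssim a_j\sqcup i$, and transitivity of $\lesssim$ applied to the hypothesis $d\lesssim a_j\sqcup i_j$ finishes it. I do not expect any genuine obstacle here --- the whole lemma is bookkeeping with three identities --- and the only point worth getting right is to isolate the absorption identity once at the start rather than re-deriving the interior-cancellation step inside each of items~(1),~(2) and~(5).
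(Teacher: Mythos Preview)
Your proof is correct and follows essentially the same route as the paper's: both rest on the absorption identity $x\sqcup p\sqcup x=x\sqcup p$ (law~(3)) and reassociation. The paper dispatches most of (1)--(4) by citation to \cite{sal} and \cite{mss}, while you spell out the same computations; for (5) the paper computes $a_j\sqcup i\sqcup d=a_j\sqcup i$ directly by inserting and then deleting $a_j\sqcup i_j$ inside the product, whereas you factor the argument through $i_j\lesssim i$ plus the monotonicity and transitivity already established in~(2) --- a slightly more modular but equivalent route.
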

\begin{proof} (1) That $\leq$ is a partial order was noted in Section 2 of \cite{sal}, and the second part is Lemma 2.1 in \cite{mss}. 

(2) That $\lesssim$ is a quasiorder is again noted in Section 2 of \cite{sal}.  
further, if $a\lesssim b$ and $c\lesssim d$ then $b\sqcup a=b$ and $d\sqcup c=d$, so 
\[
(b\sqcup d)\sqcup (a\sqcup c)=b\sqcup d\sqcup b\sqcup a\sqcup d\sqcup c=b\sqcup d\sqcup b\sqcup d=b\sqcup d,
\]
and so $a\sqcup c\lesssim b\sqcup d$.

(3) above follows from (2) and (3) of Proposition 2.1 in \cite{sal}, and (4) here follows from (1) and (3) of that result.
%

(5) Assume that $d\lesssim a_1\sqcup i_1, a_2\sqcup i_2,\ldots,a_n\sqcup i_n$.  Then $a_j\sqcup i_j\sqcup d=a_j\sqcup i_j$, $j=1,2,\ldots,n$.  Hence for any $j\in \{1,2,\ldots,n\}$, 
\begin{align*}
a_j\sqcup i\sqcup d&= a_j\sqcup i_1\sqcup i_2\sqcup\cdots\sqcup i_n\sqcup d\\
&=a_j\sqcup i_1\sqcup i_2\sqcup\cdots\sqcup i_n\sqcup a_j\sqcup i_j\sqcup d\\
&=a_j\sqcup i_1\sqcup i_2\sqcup\cdots\sqcup i_n\sqcup a_j\sqcup i_j\\
&=a_j\sqcup i_1\sqcup i_2\sqcup\cdots\sqcup i_n\\
&=a_j\sqcup i,
\end{align*}
so $d\lesssim a_j\sqcup i$. 
\end{proof}

We remark that defining $a\leq b$ if and only if $ab=b$ for all $a,b$ in a band $S$ gives a partial order if and only if $S$ is left regular.  Moreover, distinct left regular bands structures on the same set can induce the same partial order, so the poset structure does not determine the left regular band operation.  (This contrasts with the case of the {\em override} operation in overriding nearlattices, considered in the next section.)  The following example to show this is well-known.

\begin{eg} Two non-isomorphic left regular band structures on the same set giving the same partial order.  \label{difflrb}

{\em Let $S=\{b,t,e,f\}$.  Consider the four element Boolean algebra structure on $S$, with bottom $b$, top $t$, $e\wedge f=b$, $e\vee f=t$; then $(S,\vee)$ is a semilattice and hence a left regular band.  Now define $\sqcup$ on $S$ by adjoining zero $t$ and identity $b$ to the left zero semigroup defined on $\{e,f\}$.  The partial order obtained from $(S,\vee)$ equals that obtained from $(S,\sqcup)$ but obviously these are non-isomorphic bands.
}
\end{eg}

\begin{dfn}
Suppose $S$ is a left regular band.  Let $I\subseteq S$ be non-empty.  We say $I$ is a {\em $\lesssim$-ideal} if it is a down-set under $\lesssim$, additionally satisfying the requirement that if $i,j\in I$ then $i\sqcup j\in I$.  For $a\in S$, we say the $\lesssim$-ideal is {\em relatively maximal (with respect to not containing $a$)} if $a\not\in I$ and there is no larger $\lesssim$-ideal of $S$ that does not contain $a$.  
\end{dfn}  

Every maximal $\lesssim$-ideal (defined as expected)
 is relatively maximal, although the converse may fail.

\begin{lem}  \label{relmax}
Let $S$ be a left regular band with $I$ a $\lesssim$-ideal of $S$.  Suppose $I$ is relatively maximal with respect to not containing $d\in S$, and $a\not\in I$.  Then there exists $i\in I$ such that $d\lesssim a\sqcup i$.
\end{lem}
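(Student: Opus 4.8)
The plan is to obtain the conclusion by feeding the $\lesssim$-ideal generated by $I\cup\{a\}$ into the relative maximality hypothesis. Concretely, I would put
\[
J=\{\,x\in S:\ x\lesssim a\sqcup i\ \text{for some}\ i\in I\,\},
\]
which is essentially the $\lesssim$-ideal generated by $I\cup\{a\}$, and show that $J$ is a $\lesssim$-ideal with $I\subsetneq J$; then relative maximality forces $d\in J$, which unpacks to exactly the statement that $d\lesssim a\sqcup i$ for some $i\in I$.

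First I would check $I\cup\{a\}\subseteq J$. For $i\in I$ we have $(a\sqcup i)\sqcup i=a\sqcup(i\sqcup i)=a\sqcup i$ by associativity and idempotence, so $i\lesssim a\sqcup i$ and $i\in J$. Since $\lesssim$-ideals are non-empty, fix some $i\in I$; property (3) of a left regular band gives $(a\sqcup i)\sqcup a=a\sqcup i$, so $a\lesssim a\sqcup i$ and $a\in J$. As $a\notin I$, this already gives $I\subsetneq J$. Next I would verify that $J$ is a $\lesssim$-ideal. It is non-empty, and since $\lesssim$ is transitive (Lemma \ref{useful}(2)) it is a down-set under $\lesssim$: if $x\lesssim a\sqcup i$ and $y\lesssim x$ then $y\lesssim a\sqcup i$. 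The only computation of substance is closure under $\sqcup$: given $x\lesssim a\sqcup i$ and $y\lesssim a\sqcup j$ with $i,j\in I$, Lemma \ref{useful}(2) yields $x\sqcup y\lesssim(a\sqcup i)\sqcup(a\sqcup j)$, and using property (3) in the form $a\sqcup i=(a\sqcup i)\sqcup a$ together with associativity, $(a\sqcup i)\sqcup(a\sqcup j)=a\sqcup i\sqcup a\sqcup j=a\sqcup i\sqcup j=a\sqcup(i\sqcup j)$. Since $I$ is a $\lesssim$-ideal, $i\sqcup j\in I$, so $x\sqcup y\lesssim a\sqcup(i\sqcup j)$ and hence $x\sqcup y\in J$.

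Finally, $J$ is a $\lesssim$-ideal and $I\subsetneq J$, so $J$ cannot be a $\lesssim$-ideal avoiding $d$; since $I$ is relatively maximal with respect to not containing $d$, it follows that $d\in J$, i.e.\ $d\lesssim a\sqcup i$ for some $i\in I$, as required. I do not anticipate any genuine obstacle; the one place that needs care is the closure-under-$\sqcup$ step for $J$, which is precisely where both the left regular band identity (3) and the hypothesis that $I$ is closed under $\sqcup$ are used, and where Lemma \ref{useful}(2) is invoked to push $\lesssim$ through $\sqcup$.
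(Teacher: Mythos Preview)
Your proposal is correct and is essentially the same argument as the paper's: you define the same set $J$ (called $G$ in the paper), verify it is a $\lesssim$-ideal containing $I\cup\{a\}$, and invoke relative maximality. The only cosmetic difference is that for the $\sqcup$-closure of $J$ you appeal to Lemma~\ref{useful}(2) and then simplify $(a\sqcup i)\sqcup(a\sqcup j)=a\sqcup(i\sqcup j)$, whereas the paper carries out the verification that $w_1\sqcup w_2\lesssim a\sqcup(i_2\sqcup i_1)$ by a direct chain of equalities.
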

\begin{proof}
Let $G=\{w\in S\mid w\lesssim a\sqcup i, i\in I\}$.  Then $G$ is a $\lesssim$-ideal since it is obviously down-closed with respect to $\lesssim$, and moreover if $w_1\lesssim a\sqcup i_1$ and $w_2\lesssim a\sqcup i_2$ for some $i_1,i_2\in I$, then 
\begin{align*}
a\sqcup i_2\sqcup i_1\sqcup (w_1\sqcup w_2) 
&=a\sqcup i_2\sqcup a\sqcup i_1\sqcup (w_1\sqcup w_2) \\
&=a\sqcup i_2\sqcup (a\sqcup i_1\sqcup w_1)\sqcup w_2 \\
&=a\sqcup i_2\sqcup (a\sqcup i_1)\sqcup w_2 \\
&=a\sqcup i_2\sqcup a\sqcup i_1\sqcup a\sqcup i_2\sqcup  w_2 \\
&=a\sqcup i_2\sqcup a\sqcup i_1\sqcup a\sqcup i_2\\
&=a\sqcup i_2\sqcup a\sqcup i_1\\
&=a\sqcup i_2\sqcup i_1
\end{align*}
and so because $i_2\sqcup i_1\in I$, we have $w_1\sqcup w_2\in G$.  Now $a\sqcup i\sqcup i=a\sqcup i$, so $i\lesssim a\sqcup i$ for all $i\in I$, and so $I\subseteq G$, and also since $a\lesssim a\sqcup i\sqcup a=a\sqcup i$ for each $i\in I$, we have $a\lesssim a\sqcup i$ and so $a\in G$.  So by relative maximality of $I$ with respect to not containing non-zero $d\in S$, we have that $d\in G$, and so $d\lesssim a\sqcup i$ for some $i\in I$.
\end{proof}

\section{Ado-semilattices and $\curlyvee$-algebras}  \label{sec:algebras}

\subsection{Ado-semilattices}

In \cite{cirulis}, an {\em overriding nearlattice} is defined to be a nearlattice (meaning a (meet)-semilattice $(S,\wedge)$ in which any two elements with an upper bound have a least upper bound) such that, for all $a,b\in S$, the following exists:
\[a\lhd b = sup\{x\in S : (x \leq a \mbox{ and }x,b\mbox{ share an upper bound), or }x \leq b \}.\]
If $a,b$ have an upper bound, then their least upper bound is $a\lhd b$.  Note that $a\lhd b$ in an overriding nearlattice is wholly determined by its structure as a semilattice. It is observed in \cite{cirulis} that every functional algebra of signature $(\sqcup,\cap)$ is an overriding nearlattice in which $a\wedge b=a\cap b$ and $a\lhd b=b\sqcup a$.

In \cite{cirulis}, the author went on to axiomatise those algebras $(S,\cap,\lhd)$ that arise from overriding nearlattices as above, as {\em semilattices with overriding}, abbreviated to {\em o-semilattices}.  Using our notation, with $a\sqcup b=b\lhd a$, o-semilattices may be defined as algebras $(L,\cap,\sqcup)$ such that $(L,\cap)$ is a semilattice, and such that, defining $\leq$ via $x\leq y\Leftrightarrow x=x\cap y$ as usual, we have for all $x,y,z\in L$:  
\ben
\item $x\leq x\sqcup y$;
\item $(x\cap y)\sqcup (y\cap z)\leq y$;
\item $x\sqcup y \leq x\sqcup(y\cap (x\sqcup y))$;
\item $x\cap z\leq (x\cap y)\sqcup z$.
\een
In what follows, we use this notation and these laws when we refer to o-semilattices.

Functional algebras of signature $(\sqcup,\cap)$ were amongst the main motivating examples of overriding nearlattices (and hence of o-semilattices) given in \cite{cirulis}.  The author described their structure: they are precisely the subdirect products of flat o-semilattices; see Theorem 5.2 there.  (A poset $(P,\leq)$ is {\em flat} if there is a smallest element $0\in P$ and all other elements are maximal; these are easily seen to be o-semilattices.)  He did not axiomatise them, but did give a finite equational axiomatisation of the functional algebras of signature $(\lhd,\backslash)$, as the class of overriding nearlattices with associative $\lhd$ in which the principal ideal (with respect to the meet-semilattice order $\leq$) generated by each element is a Boolean algebra.  He gave an equational axiomatisation in the language of $(\lhd,\backslash)$ as so-called {\em subtraction o-semilattices}; we return to these in Section \ref{sec:diff}.

An overriding nearlattice (hence an o-semilattice) is said to be {\em distributive} if the principal ideal generated by each element is a distributive lattice; so ``Boolean" implies ``distributive".  In terms of the signature $(\sqcup,\cap)$, this distributivity condition can be stated equationally as
\[
(a\cap d)\sqcup ((b\cap d)\cap (c\cap d))=((a\cap d)\sqcup (b\cap d))\cap ((a\cap d)\sqcup (c\cap d)).
\]
(Of course there is a dual form in terms of $\lhd$, which is the form given in \cite{cirulis}.) Every functional o-semilattice is distributive (since for partial functions contained in a given one, $\sqcup$ is the same as union).  But it is also associative, meaning that $(x\sqcup y)\sqcup z=x\sqcup (y\sqcup z)$.  

\begin{dfn}
An {\em ado-semilattice} is a distributive o-semilattice in which $\sqcup$ is associative.
\end{dfn}

If an o-semilattice is a lattice, (so that $\sqcup$ is simply join), then the distributivity condition is simply lattice distributivity, which is also sufficient for functional representability since every distributive lattice can be represented as subsets of a set with intersection and union as the meet and join.  So amongst o-semilattices which are lattices, distributivity is necessary and sufficient for functional representability.  We shortly show that this is true for general associative o-semilattices.  

Next, we note some laws satisfied by ado-semilattices, some of which we have seen but some of which are new; since the ado-semilattice laws are functionally sound, so must these derived laws be.

\begin{lem}  \label{eqado}
Every ado-semilattice $(S,\sqcup,\cap)$ is such that  $(S,\sqcup)$ is a left regular band, satisfying {\em extended distributivity} given by $a\sqcup (b\cap c)=(a\sqcup b)\cap (a\sqcup c)$.
\end{lem}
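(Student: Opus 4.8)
The plan is to prove the two assertions in turn: first that $(S,\sqcup)$ is a left regular band, and then that extended distributivity holds; in the second part the band structure and Lemma~\ref{useful} will be used freely. Idempotency of $\sqcup$ is immediate: the first o-semilattice law $x\leq x\sqcup y$ gives $a\leq a\sqcup a$, and the second law applied to the triple $(a,a,a)$ gives $a\sqcup a=(a\cap a)\sqcup(a\cap a)\leq a$, so $a\sqcup a=a$ by antisymmetry of $\leq$. I would next record the auxiliary implication
\[
x\leq y \ \Longrightarrow\ y\sqcup x=y,
\]
which again follows from the first two o-semilattice laws: $y\leq y\sqcup x$ by the first, and, since $x\leq y$ means $y\cap x=x$, the second law applied to $(y,y,x)$ gives $y\sqcup x=(y\cap y)\sqcup(y\cap x)\leq y$. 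Taking $x=a$ and $y=a\sqcup b$ (legitimate since $a\leq a\sqcup b$) then yields the remaining left regular band law $(a\sqcup b)\sqcup a=a\sqcup b$; together with the associativity built into the word ``ado'', this shows $(S,\sqcup)$ is a left regular band. The same auxiliary implication and the first o-semilattice law also show that the meet-order $\leq$ coincides with the band order $x\sqcup y=y$, so all parts of Lemma~\ref{useful} are available; in particular $\sqcup$ is monotone in each variable with respect to $\leq$.

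Before attacking extended distributivity I would clarify the structure of a principal ideal $\downarrow\! d=\{x\in S:x\leq d\}$. If $x,y\leq d$ then $x\sqcup y\leq x\sqcup d=d$ by monotonicity and the auxiliary implication, while the fourth o-semilattice law applied to $(d,x,y)$ gives $y=d\cap y\leq(d\cap x)\sqcup y=x\sqcup y$; combined with $x\leq x\sqcup y$, this identifies $x\sqcup y$ as the join of $x$ and $y$ in $\downarrow\! d$. Hence $(\downarrow\! d,\cap,\sqcup)$ is a lattice, and it is distributive because $S$ is a distributive o-semilattice, so inside any principal ideal both distributive laws and both absorption laws are at our disposal. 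I would also note the consequence of the third o-semilattice law together with monotonicity, namely $x\sqcup y=x\sqcup\bigl(y\cap(x\sqcup y)\bigr)$: the second argument of $\sqcup$ may be shrunk to something lying below the value.

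For extended distributivity, the inclusion $a\sqcup(b\cap c)\leq(a\sqcup b)\cap(a\sqcup c)$ is just monotonicity, so the work is in the reverse inclusion. Put $m=(a\sqcup b)\cap(a\sqcup c)$, $b'=b\cap(a\sqcup b)$ and $c'=c\cap(a\sqcup c)$, so that $a\sqcup b'=a\sqcup b$ and $a\sqcup c'=a\sqcup c$ by the last remark, while $a\leq m\leq a\sqcup b$ and $a,b'\leq a\sqcup b$. Working inside the distributive lattice $\downarrow\!(a\sqcup b)$ gives $m=m\cap(a\sqcup b')=(m\cap a)\sqcup(m\cap b')=a\sqcup(m\cap b')$, and symmetrically $m=a\sqcup(m\cap c')$. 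Substituting the second equation into $m\cap b'$ and distributing once more in $\downarrow\!(a\sqcup b)$ (which contains $a$, $m\cap c'$ and $b'$), then using the absorptions $a\cap b'=a\cap b$ and $c'\cap b'=(b\cap c)\cap m$, yields $m\cap b'=(a\cap b)\sqcup(m\cap b\cap c)$. Finally associativity and the auxiliary implication (in the form $a\sqcup(a\cap b)=a$) give $m=a\sqcup(m\cap b')=a\sqcup(m\cap b\cap c)$, and since $m\cap b\cap c\leq b\cap c$ one last application of monotonicity gives $m\leq a\sqcup(b\cap c)$, completing the proof.

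The main obstacle is precisely this reverse inclusion. Because $b$ and $c$ need have no common upper bound, the distributivity hypothesis cannot be applied to them directly; the key manoeuvre is to replace them by the restrictions $b'$ and $c'$ — harmless by the third o-semilattice law — which, together with $m$ and $a$, all drop into the single principal ideal $\downarrow\!(a\sqcup b)$, where the distributive lattice laws and global associativity can then be combined. Everything else (idempotency, the auxiliary implication, the third band law, and the lattice description of principal ideals) is routine bookkeeping with the o-semilattice axioms.
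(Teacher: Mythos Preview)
Your argument is correct, modulo two harmless slips: the auxiliary implication plus law~(1) gives only $y\sqcup x=y$ from $x\leq y$, not $x\sqcup y=y$, so to identify the meet order with the band order you also need law~(2) on $(x,y,y)$ (giving $x\sqcup y\leq y$) and law~(4) on $(y,x,y)$ (giving $y\leq x\sqcup y$); and $\sqcup$ is not in fact monotone in its left variable (take two distinct total functions on a one-point domain), though you never actually use this. Your route to extended distributivity is genuinely different from the paper's. The paper works forward from $a\sqcup(b\cap c)$: it first proves the one-variable saturation $a\sqcup(b\cap c)=a\sqcup(b\cap(a\sqcup c))$ via the strengthened third o-semilattice law and distributivity inside $\downarrow\!(a\sqcup c)$, and then applies this twice (once in each variable) to reach $a\sqcup\bigl((a\sqcup b)\cap(a\sqcup c)\bigr)=(a\sqcup b)\cap(a\sqcup c)$. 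You instead bound $m=(a\sqcup b)\cap(a\sqcup c)$ from above: distributivity in the two principal ideals $\downarrow\!(a\sqcup b)$ and $\downarrow\!(a\sqcup c)$ gives $m=a\sqcup(m\cap b')=a\sqcup(m\cap c')$, and substituting one into the other collapses to $m=a\sqcup(m\cap b\cap c)\leq a\sqcup(b\cap c)$. Both arguments hinge on the same manoeuvre---pushing everything into a principal ideal where the distributivity hypothesis is available---but the paper isolates a reusable intermediate lemma (saturation of one argument), whereas your version is a single self-contained computation that makes the interplay of the two principal ideals more explicit.
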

\begin{proof}
Most of the laws follow from properties listed in Lemma 2.2 of \cite{cirulis}; in particular, the left regular band laws are associativity together with properties (xiv) and (xvi), and $a\cap b=b\cap a\leq a$ follows from (xii).  

Next we prove extended distributivity.  First note that the third law $x\sqcup y \leq x\sqcup(y\cap (x\sqcup y))$ given above for o-semilattices can be strengthened to an equation since the reverse inclusion holds by (1) in Lemma \ref{useful}.  Hence, for all $a,b,c$,
\begin{align*}
a\sqcup (b\cap c)&=a\sqcup ((b\cap c)\cap(a\sqcup (b\cap c))\\
&\leq a\sqcup ((b\cap c)\cap(a\sqcup (b\cap c)\sqcup c)\\
&=a\sqcup ((b\cap c)\cap (a\sqcup c))\\
&\leq a\sqcup (b\cap c),
\end{align*}
so all are equal, and so in particular, 
\begin{align*}
a\sqcup (b\cap c)&=a\sqcup ((b\cap c)\cap (a\sqcup c))\\
&=(a\cap(a\sqcup c))\sqcup ((b\cap(a\sqcup c))\cap(c\cap(a\sqcup c)))\\
&=((a\cap(a\sqcup c))\sqcup (b\cap(a\sqcup c))\cap(a\cap(a\sqcup c))\sqcup (c\cap(a\sqcup c)))\\
&\mbox{ (by o-semilattice distributivity)}\\
&=(a\sqcup (b\cap(a\sqcup c)))\cap(a\sqcup (c\cap(a\sqcup c)))\\
&=(a\sqcup (b\cap(a\sqcup c)))\cap(a\sqcup c)\\
&\mbox{ (by the strengthened o-semilattice law (3))}\\
&=a\sqcup (b\cap (a\sqcup c))\mbox{  since $a\sqcup (b\cap (a\sqcup c))\leq a\sqcup(a\sqcup c)=a\sqcup c$}\\
&\mbox{ (upon using (1) of Lemma \ref{useful})}\\ 
&=a\sqcup ((a\sqcup c)\cap b).
\end{align*}
But then applying this result again to $a\sqcup (b\cap(a\sqcup c))$ gives that
\[ a\sqcup (b\cap c)=a\sqcup ((a\sqcup b)\cap(a\sqcup c))=(a\sqcup b)\cap(a\sqcup c),\]
since $a\leq a\sqcup b,a\sqcup c$, so $a\leq (a\sqcup b)\cap(a\sqcup c)$.
\end{proof}

The extended distributivity law can be used to replace the distributivity law for ado-semilattices since the former clearly implies the latter.  We can then give an alternative axiomatisation for the class of ado-semilattices in which the partial order determined by $\sqcup$ is primary.  

\begin{cor}  \label{altado}
The class of ado-semilattices may be defined by the following:
\ben
\item $\sqcup$ is a left regular band operation;
\item $\cap$ is semilattice meet with respect to the partial order determined by $\sqcup$;
\item $a\sqcup(b\cap c)=(a\sqcup b)\cap (a\sqcup c)$;
\item  $a\cap c\leq (a\cap b)\sqcup c$.
\een
\end{cor}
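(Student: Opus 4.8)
The plan is to establish the two inclusions between the class of ado-semilattices and the class axiomatised by (1)--(4), the bulk of the work having already been done in Lemma~\ref{eqado} and the remarks following it.

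First I would treat the inclusion that every ado-semilattice satisfies (1)--(4). By Lemma~\ref{eqado}, $(S,\sqcup)$ is a left regular band satisfying extended distributivity, which is (1) and (3); and (4) is verbatim the fourth o-semilattice law. So the only point requiring argument is (2): since $(S,\cap)$ is a semilattice by hypothesis, it suffices to show that its induced order coincides with the order $\le_\sqcup$ determined by $\sqcup$ (which is a partial order by Lemma~\ref{useful}(1)). One containment is immediate from o-semilattice law (1): if $x\sqcup y=y$ then $x\le x\sqcup y=y$. For the reverse, assume $x\le y$, i.e.\ $x\cap y=x$. Instantiating o-semilattice law (2) at $z:=y$ yields $x\sqcup y=(x\cap y)\sqcup(y\cap y)\le y$; instantiating o-semilattice law (4) at $(x,y,z):=(y,x,y)$ yields $y=y\cap y\le(y\cap x)\sqcup y=x\sqcup y$. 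Hence $x\sqcup y=y$, and the two orders agree, so (2) holds.

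For the converse inclusion, let $(L,\sqcup,\cap)$ satisfy (1)--(4). Condition (2) says that $x\cap y$ is the greatest lower bound of $\{x,y\}$ under $\le_\sqcup$, so $(L,\cap)$ is a semilattice whose induced order is exactly $\le_\sqcup$; write $\le$ for this common order. Associativity of $\sqcup$ is part of (1), and the o-semilattice distributivity equation is the instance of extended distributivity (3) obtained by substituting $a\cap d$, $b\cap d$, $c\cap d$ for the three variables, as noted after Lemma~\ref{eqado}. It then remains to check the four o-semilattice laws. Law (1), $x\le x\sqcup y$, holds because $x\sqcup(x\sqcup y)=(x\sqcup x)\sqcup y=x\sqcup y$. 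For law (2), $x\cap y\le y$ and $y\cap z\le y$ give $(x\cap y)\sqcup y=y$ and $(y\cap z)\sqcup y=y$, so $\big((x\cap y)\sqcup(y\cap z)\big)\sqcup y=(x\cap y)\sqcup\big((y\cap z)\sqcup y\big)=(x\cap y)\sqcup y=y$, i.e.\ $(x\cap y)\sqcup(y\cap z)\le y$. For law (3), extended distributivity gives $x\sqcup(y\cap(x\sqcup y))=(x\sqcup y)\cap(x\sqcup(x\sqcup y))=(x\sqcup y)\cap(x\sqcup y)=x\sqcup y$, so equality holds there. Finally law (4) is condition (4) itself. Hence $(L,\sqcup,\cap)$ is a distributive o-semilattice with associative $\sqcup$, i.e.\ an ado-semilattice.

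The argument is mostly bookkeeping; the only delicate point is the identification of the meet-order of $\cap$ with $\le_\sqcup$ in the first half, and within that the step deriving $y\le x\sqcup y$ from $x\le y$ by the specialisation $(x,y,z):=(y,x,y)$ of o-semilattice law (4). Once that identification is secured, each o-semilattice law collapses either to a left-regular-band computation or to a single instance of extended distributivity, and (4) transfers unchanged.
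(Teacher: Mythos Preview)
Your argument is correct and follows essentially the same route as the paper's proof: cite Lemma~\ref{eqado} for the forward direction, then in the reverse direction derive the o-semilattice laws (1)--(3) from the left regular band axioms together with extended distributivity, with law (4) and associativity carried over verbatim. Your treatment is in fact more careful than the paper's on one point: you explicitly verify condition (2) in the forward direction by showing the $\cap$-order and the $\sqcup$-order coincide, whereas the paper's proof simply asserts that all four conditions follow from Lemma~\ref{eqado}, which does not itself address (2).
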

\begin{proof}
We have seen in Lemma \ref{eqado} that the above laws follow from those for ado-semilattices.  Conversely, these laws imply those for ado-semilattices (expressed in terms of $\sqcup$): the first two laws of o-semilattices follow easily, and the third follows from the third extended distributivity law above, as does ado-semilattice distributivity, and associativity of $\sqcup$ is immediate.
\end{proof}

We next establish some further facts about ado-semilattices.

\begin{lem}  \label{goodies}
Suppose $(S,\sqcup,\cap)$ is an ado-semilattice.  Then for all $a,b,c\in S$, 
\ben
\item $a\sqcup b=b\sqcup a$ if and only if $a,b$ have an upper bound;
\item $a\cap ((a\cap b)\sqcup c)=a\cap ((a\cap c)\sqcup b)$.
\een
\end{lem}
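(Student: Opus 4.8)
The two identities are equational consequences of the ado-semilattice axioms, so the natural route is to derive them from the laws collected in Corollary \ref{altado} and Lemma \ref{eqado}, especially extended distributivity $a\sqcup(b\cap c)=(a\sqcup b)\cap(a\sqcup c)$, the left regular band laws for $\sqcup$, and the absorption-type law (4) of Corollary \ref{altado}, $a\cap c\le(a\cap b)\sqcup c$. Since these laws are functionally sound one can sanity-check each claimed step by thinking about partial functions, but the write-up should be purely algebraic.

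For (1), one direction is immediate: if $a\sqcup b=b\sqcup a$ then by Lemma \ref{useful}(3) and the definition of $\leq$ this common value is an upper bound of both $a$ and $b$ (indeed $a\le a\sqcup b$ always, and $b\le b\sqcup a=a\sqcup b$). For the converse, suppose $a,b\le u$. The idea is to show $a\sqcup b=a\sqcup b\sqcup u=u\cap(a\sqcup b\sqcup u)=\dots=b\sqcup a$ by playing $a\le u$ and $b\le u$ off against extended distributivity and the strengthened third o-semilattice law $x\sqcup y=x\sqcup(y\cap(x\sqcup y))$ established inside the proof of Lemma \ref{eqado}. Concretely I expect to compute $a\sqcup b=a\sqcup(b\cap u)=(a\sqcup b)\cap(a\sqcup u)$ and symmetrically $b\sqcup a=(b\sqcup a)\cap(b\sqcup u)$, then reconcile the two using that $a\sqcup u$ and $b\sqcup u$ both contain $a\sqcup b$'s domain information; alternatively, use part (2) of this very lemma once it is proved, or Lemma \ref{useful}, to collapse $a\sqcup u=b\sqcup u$ when $a,b\le u$ and deduce commutativity. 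This is the step I expect to be the main obstacle: getting the order of rewrites right so that the asymmetric law $\sqcup$ yields a symmetric conclusion. The cleanest version is probably to first show $a\le u$ implies $a\sqcup u=u$ would be too strong (it gives only $a\sqcup u\sim u$ and $u\le a\sqcup u$), so instead I would show $a\sqcup b\le u$ and $b\sqcup a\le u$ together with $a\sqcup b\sim b\sqcup a\sim u$ forces equality once we know both lie below $u$; that last implication is exactly where extended distributivity applied as $u=u\sqcup(a\cap b)$-type manipulations, or the meet-semilattice structure, does the work.

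For (2), the plan is a direct calculation. Write $L=a\cap((a\cap b)\sqcup c)$ and $R=a\cap((a\cap c)\sqcup b)$; by symmetry it suffices to show $L\le R$. Using extended distributivity, $a\sqcup((a\cap b)\sqcup c)$-style identities let one pull $a$ through, but the more promising move is: $L=a\cap((a\cap b)\sqcup c)\le a\cap((a\cap b)\sqcup(a\cap c)\sqcup c)$ via Lemma \ref{useful}(1) and $a\cap c\le c$, and then $(a\cap b)\sqcup(a\cap c)\sqcup c=(a\cap c)\sqcup(a\cap b)\sqcup c$? — no, $\sqcup$ isn't commutative, so instead use law (4) of Corollary \ref{altado} to get $a\cap c\le(a\cap b)\sqcup c$, hence $(a\cap b)\sqcup c=(a\cap b)\sqcup(a\cap c)\sqcup c$ after all (since $x\le y\Rightarrow y=x\sqcup y$ is false in general, but $x\le y$ does give $x\sqcup y=y$ by definition of $\le$!). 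That observation is the key: $a\cap c\le(a\cap b)\sqcup c$ means $(a\cap c)\sqcup((a\cap b)\sqcup c)=(a\cap b)\sqcup c$, and similarly $(a\cap b)\sqcup((a\cap c)\sqcup b)=(a\cap c)\sqcup b$. So I would insert $a\cap c$ on the left of $(a\cap b)\sqcup c$ and $a\cap b$ on the left of $(a\cap c)\sqcup b$, getting $L=a\cap((a\cap c)\sqcup(a\cap b)\sqcup c)$ and $R=a\cap((a\cap b)\sqcup(a\cap c)\sqcup b)$, and then show both equal $a\cap((a\cap b)\sqcup(a\cap c)\sqcup c\sqcup b)$ or some common refinement by a further symmetric absorption, using that $a\cap b,a\cap c\le a$ and meeting with $a$ kills the trailing $b$ versus $c$ asymmetry via extended distributivity $a\sqcup(\cdots)$ pushed inside the meet. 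I expect this to close after two or three applications of law (4) and one or two of extended distributivity; if it resists, an alternative is to verify it directly on flat o-semilattices and invoke the subdirect representation (Theorem 5.2 of \cite{cirulis}), but since at this point in the paper we are trying to establish the representation, the equational derivation is the appropriate one.
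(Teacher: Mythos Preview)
Your plan leaves both parts unfinished, and in each case it misses the observation that makes the paper's argument a one- or two-liner.

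For (1), the converse is not ``the main obstacle'': it is immediate from the definition of an o-semilattice. An o-semilattice is precisely an overriding nearlattice, and by definition of the latter, whenever $a$ and $b$ have an upper bound their least upper bound exists and equals $a\lhd b=b\sqcup a$; symmetrically it equals $a\sqcup b$, so $a\sqcup b=b\sqcup a$. (Equivalently: the principal ideal below any upper bound $c$ is a lattice, so $\sqcup$ restricted to it is join, hence commutative.) Your sketch tries to recover this by raw equational manipulation in the left regular band, never invoking that $\sqcup$ computes joins below an upper bound, and you explicitly leave the argument incomplete. The intermediate claim you float --- that $a\sqcup b,b\sqcup a\le u$ together with $a\sqcup b\sim b\sqcup a$ should force equality --- would itself need exactly the lattice-in-the-principal-ideal fact to justify.

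For (2), the paper's route is to apply extended distributivity \emph{with $a\cap b$ on the left}, not with $a$ on the outside: $(a\cap b)\sqcup(a\cap c)=((a\cap b)\sqcup a)\cap((a\cap b)\sqcup c)=a\cap((a\cap b)\sqcup c)$, using $(a\cap b)\sqcup a=a$ since $a\cap b\le a$. Symmetrically $(a\cap c)\sqcup(a\cap b)=a\cap((a\cap c)\sqcup b)$. Now $a\cap b$ and $a\cap c$ have the common upper bound $a$, so part (1) gives $(a\cap b)\sqcup(a\cap c)=(a\cap c)\sqcup(a\cap b)$ and the identity follows. Your plan keeps $a\cap(\cdots)$ on the outside and tries to symmetrise the inside by inserting terms via law (4); that route does not close easily because extended distributivity moves $\sqcup$ over $\cap$, not $\cap$ over $\sqcup$, so you cannot simplify $a\cap(X\sqcup b)$ the way you need. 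The missing idea is precisely to use (1) to commute $(a\cap b)\sqcup(a\cap c)$, which your sketch never does.
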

\begin{proof}
Now if $a\sqcup b=b\sqcup a$ then $a,b\leq a\sqcup b$.  Conversely if $a,b\leq c$ then $a\sqcup b=b\sqcup a$ because $a\sqcup b$ is the join of $a,b$ in the principal ideal determined by $c$.this establishes (1).  For (2), using extended distributivity we obtain 
\[
(a\cap b)\sqcup (a\cap c)=((a\cap b)\sqcup a)\cap ((a\cap b)\sqcup c)=a\cap((a\cap b)\sqcup c),
\] 
and similarly $(a\cap c)\sqcup (a\cap b)=a\cap((a\cap c)\sqcup b)$.  But $a\cap b,a\cap c\leq a$ so $(a\cap b)\sqcup (a\cap c)=(a\cap c)\sqcup (a\cap b)$ by the first part of the proof.  The result now follows.
\end{proof}

\begin{lem} \label{critical}
Suppose $(S,\sqcup,\cap)$ is an ado-semilattice.  If $a,b\in S$ have an upper bound and $d\lesssim a, d\lesssim b$, then $d\lesssim a\cap b$.
\end{lem}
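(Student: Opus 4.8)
The statement is equivalent to the single equation $(a\cap b)\sqcup d = a\cap b$, since the inclusion $a\cap b\le(a\cap b)\sqcup d$ is immediate from the o\nobreakdash-semilattice law $x\le x\sqcup y$. The plan is to put $a$ and $b$ inside a lattice and let the lattice do the work. Because $a,b$ have an upper bound, Lemma \ref{goodies}(1) gives $e:=a\sqcup b=b\sqcup a$; then $a\le a\sqcup b=e$ and $b\le b\sqcup a=e$ by $x\le x\sqcup y$, so $a\cap b\le e$ too. I would first note that the principal ideal $\downarrow e=\{x\in S:x\le e\}$ is closed under $\cap$ (clear) and under $\sqcup$ (if $x,y\le e$ then $(x\sqcup y)\sqcup e=x\sqcup(y\sqcup e)=x\sqcup e=e$), and that $\sqcup$ is commutative on $\downarrow e$ by Lemma \ref{goodies}(1); hence $\sqcup$ restricted to $\downarrow e$ is the join of the meet-semilattice $(\downarrow e,\cap)$, so $(\downarrow e,\cap,\sqcup)$ is a distributive lattice in which the relations $\le$ and $\lesssim$ coincide (as $y\sqcup x=y\iff x\vee y=y\iff x\le y$ when $\sqcup$ is commutative).

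Next I would push the hypotheses on $d$ into this lattice. From $d\lesssim a$ we get $d\cap a\le a\le e$ and $d\cap a\lesssim d\lesssim b$; since $d\cap a$ and $b$ both lie in $\downarrow e$, where $\lesssim$ is $\le$, this gives $d\cap a\le b$, hence $d\cap a\le a\cap b$, and symmetrically $d\cap b\le a\cap b$. Now I would use the identity
\[
 a\cap\bigl((a\cap b)\sqcup c\bigr)=\bigl((a\cap b)\sqcup a\bigr)\cap\bigl((a\cap b)\sqcup c\bigr)=(a\cap b)\sqcup(a\cap c),
\]
which is just extended distributivity (Lemma \ref{eqado}) together with $(a\cap b)\sqcup a=a$, exactly as in the proof of Lemma \ref{goodies}. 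With $c=d$ this yields $a\cap\bigl((a\cap b)\sqcup d\bigr)=(a\cap b)\sqcup(a\cap d)=a\cap b$ (using $a\cap d\le a\cap b$), and symmetrically $b\cap\bigl((a\cap b)\sqcup d\bigr)=a\cap b$. Writing $w:=(a\cap b)\sqcup d$, I would also record $a\cap b\le w$ (again by $x\le x\sqcup y$) and $w\lesssim a$, $w\lesssim b$, the latter because $a\cap b\lesssim a$, $d\lesssim a$ and Lemma \ref{useful}(2) give $w=(a\cap b)\sqcup d\lesssim a\sqcup a=a$.

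The final step is then to deduce $w=a\cap b$ from $a\cap b\le w$, $w\cap a=w\cap b=a\cap b$, and $w\lesssim a$, $w\lesssim b$. This is the genuine content of the lemma: the identities say that $w$ lies above $a\cap b$ and meets each of $a,b$ in exactly $a\cap b$, and the $\lesssim$-smallness of $w$ forces $w$ to have ``no more domain'' than $a\cap b$, i.e.\ that appending $d$ to $a\cap b$ adds nothing — which is precisely where $d\lesssim a,b$ and the compatibility $a\sqcup b=b\sqcup a$ of $a,b$ must be combined. I expect this domain-bookkeeping to be the main obstacle; the cleanest route is probably to argue it directly inside $\downarrow e$ (showing $w\le e$, whence $w=w\cap e=a\cap b$), or to invoke the structural description of distributive o\nobreakdash-semilattices from \cite{cirulis}.
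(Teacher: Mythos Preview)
Your setup is correct and the intermediate facts you derive are all valid: the principal ideal $\downarrow e$ is a distributive lattice on which $\le$ and $\lesssim$ coincide, the inequality $d\cap a\le a\cap b$ follows as you say, and the extended distributivity computation giving $w\cap a=w\cap b=a\cap b$ is fine. The problem is that you stop exactly where the lemma begins. You yourself flag the deduction of $w=a\cap b$ as ``the genuine content of the lemma'' and ``the main obstacle'', and then offer only two unexecuted routes. Neither works as stated. The first, ``show $w\le e$'', is not a simplification: given what you have already established, $w\le e$ is \emph{equivalent} to $w=a\cap b$ (forward by your distributive-lattice argument, backward trivially since $a\cap b\le e$), so you have merely restated the goal. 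There is no reason to expect $w=(a\cap b)\sqcup d\le e$ directly, since $d$ need not lie in $\downarrow e$; you only know $d\lesssim e$. The second route, invoking a structural description from \cite{cirulis}, is circular here: Cirulis does not axiomatise the $(\sqcup,\cap)$ functional algebras, and the present lemma is precisely one of the tools the paper uses to obtain that axiomatisation (via Lemma~\ref{qe} and then Lemma~\ref{biggie2} and Theorem~\ref{t3}).

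The paper's proof fills this gap by a direct equational chain that never leaves the ado-semilattice axioms. The key move you are missing is the identity $b\cap((a\cap b)\sqcup d\sqcup b)=b\cap((b\cap(d\sqcup b))\sqcup a)$ obtained from Lemma~\ref{goodies}(2), combined with a separate calculation showing $(b\cap(d\sqcup b))\sqcup a=b\sqcup a$. Together these yield $b\le(a\cap b)\sqcup d\sqcup b$, from which $(a\cap b)\sqcup d\le b$ (and symmetrically $\le a$) follows by a short left regular band manipulation. This is the ``domain-bookkeeping'' you anticipated, and it requires real work with the o-semilattice law~(3) and Lemma~\ref{goodies}(2) that your sketch does not supply.
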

\begin{proof}
Assume $a,b\in S$ have an upper bound and $d\lesssim a, d\lesssim b$.  We prove the following facts in turn. 
\ben
\item $b\leq (b\cap (d\sqcup b))\sqcup a$;
\item $b\sqcup a= (b\cap (d\sqcup b))\sqcup a$;
\item $b\leq (a\cap b)\sqcup d\sqcup b$;
\item $d\lesssim a\cap b$.
\een
Since $d\lesssim a, d\lesssim b$, we obtain $a\sqcup d=a, b\sqcup d=b$.
For (1), we have that $b\sqcup a=a\sqcup b$ by (1) in Lemma \ref{goodies}, and so from (3) in Corollary \ref{altado},
\[b\leq a\sqcup b=(a\sqcup b)\cap(a\sqcup b)=(a\sqcup b)\cap(a\sqcup d\sqcup b)=a\sqcup (b\cap(d\sqcup b))
=(b\cap(d\sqcup b))\sqcup a,\] 
again using (1) in Lemma \ref{goodies} (since $a,b\cap(d\sqcup b)$ have an upper bound, namely any upper bound of $a,b$).
For (2), first note that
\begin{align*}
(b\sqcup a)\cap ((b\cap (d\sqcup b))\sqcup a)
&= (b\sqcup a)\cap ((b\cap (d\sqcup b))\sqcup a\sqcup b)\mbox{ (by (1))}\\
&= (b\sqcup a)\cap ((b\cap (d\sqcup b))\sqcup b\sqcup a)\\
&=(b\sqcup a)\cap (b\sqcup a)\\
&\mbox{(since $b\cap (d\sqcup b)\leq b$)}\\
&=b\sqcup a,
\end{align*}
so $b\sqcup a\leq ((b\cap (d\sqcup b))\sqcup a$.  Hence
\begin{align*}
(b\cap (d\sqcup b))\sqcup a)&=(b\sqcup a)\sqcup((b\cap (d\sqcup b))\sqcup a))\\
&=b\sqcup a\sqcup (b\cap(d\sqcup b))\\
&=b\sqcup a\sqcup b\sqcup (b\cap(d\sqcup b))\\
&=b\sqcup a\sqcup b\\
&\mbox{(since $b\cap(d\sqcup b)\lesssim b$ by (4) in Lemma \ref{useful})}\\
&=b\sqcup a.
\end{align*}
For (3),
\begin{align*}
b\cap ((a\cap b)\sqcup d\sqcup b)
&=b\cap((b\cap(d\cap b))\sqcup a)\\
&\mbox{(by (2) in Lemma \ref{goodies})}\\
&=b\cap(b\sqcup a)\mbox{ (by (2))}\\
&=b,
\end{align*}
so $b\leq (a\cap b)\sqcup d\sqcup b$.
Finally, for (4):
\begin{align*}
(a\cap b)\sqcup d\sqcup b&=b\sqcup(a\cap b)\sqcup d\sqcup b\mbox{ (by (3))}\\
&=b\sqcup d\sqcup b\\
&=b\sqcup d\\
&=b,
\end{align*}
so $b\geq (a\cap b)\sqcup d$.  Similarly, $a\geq (b\cap a)\sqcup d=(a\cap b)\sqcup d$.  So 
\[(a\cap b)\sqcup d\leq a\cap b\leq (a\cap b)\sqcup d,\]
so $a\cap b=(a\cap b)\sqcup d$, and so $d\lesssim a\cap b$.
\end{proof}

We may now infer two quasiequations for ado-semilattices.

\begin{lem}  \label{qe}
In an ado-semilattice, the following two quasiequations hold:
\ben
\item $d\lesssim a\cap b, d\lesssim b\cap c\Rightarrow d\lesssim a\cap c$;
\item $d\lesssim a, d\lesssim b, d\lesssim a\curlyvee b\Rightarrow d\lesssim a\cap b$.
\een
\end{lem}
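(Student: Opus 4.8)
The plan is to handle the two quasiequations in order, since the first one is essentially a restatement of Lemma~\ref{critical} and the second one builds on it.

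For part (1): assume $d\lesssim a\cap b$ and $d\lesssim b\cap c$. Since $\lesssim$ is a quasiorder (Lemma~\ref{useful}(2)) and $a\cap b\lesssim a$, $a\cap b\lesssim b$, $b\cap c\lesssim b$, $b\cap c\lesssim c$ (each of these because $x\cap y\leq x$ and $\leq$ implies $\lesssim$, by Lemma~\ref{useful}(4), or directly since $x\cap y = (x\cap y)\cap x$ so $x\sqcup(x\cap y)$-type reasoning), we get $d\lesssim a$, $d\lesssim b$, $d\lesssim c$. Now I want to apply Lemma~\ref{critical} to $a$ and $c$, but that requires $a,c$ to have an upper bound. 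The key observation is that $d\lesssim a\cap b$ means $(a\cap b)\sqcup d = a\cap b$, but more usefully: since $a\cap b \leq a$ and $a\cap b\leq b$, and $b\cap c\leq b$, $b\cap c\leq c$ --- hmm, this alone does not give $a,c$ a common upper bound. I think the correct route is: $d\lesssim a\cap b$ and $d\lesssim b\cap c$ with $a\cap b \leq b$ and $b\cap c\leq b$; then apply Lemma~\ref{critical} twice, but we need a common upper bound. Actually $a$ and $c$ need not have one in general, so the hypothesis $d\lesssim a\cap b, d\lesssim b\cap c$ must be doing more work --- note $a\cap b$ and $b\cap c$ both lie below $b$, hence have upper bound $b$, so by Lemma~\ref{critical} $d\lesssim (a\cap b)\cap(b\cap c) = a\cap b\cap c \lesssim a\cap c$. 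That is the argument: reduce to the pair $a\cap b$, $b\cap c$ which has the explicit common upper bound $b$.

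For part (2): assume $d\lesssim a$, $d\lesssim b$, and $d\lesssim a\curlyvee b$, where $a\curlyvee b = (a\sqcup b)\cap(b\sqcup a)$ by definition. I want to conclude $d\lesssim a\cap b$. The first step is to replace the hypothesis $d\lesssim a\sqcup b$ and $d\lesssim b\sqcup a$ (both of which follow since $a\curlyvee b\leq a\sqcup b$ and $a\curlyvee b\leq b\sqcup a$) together with $d\lesssim a$, $d\lesssim b$. The natural move is to show $a$ and $b$ share an upper bound --- but they need not in general, so again the hypothesis must force this. I expect the key is: from $d\lesssim a\curlyvee b$ and $d\lesssim a$, apply part (1) or Lemma~\ref{critical} cleverly. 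Writing $u = a\sqcup b$ and noting $a\leq u$, $a\curlyvee b\leq u$, we have $a$ and $a\curlyvee b$ share the upper bound $u$; so by Lemma~\ref{critical}, from $d\lesssim a$ and $d\lesssim a\curlyvee b$ we get $d\lesssim a\cap(a\curlyvee b)$. Now $a\cap(a\curlyvee b) = a\cap(a\sqcup b)\cap(b\sqcup a) = a\cap(b\sqcup a)$ (since $a\leq a\sqcup b$). So $d\lesssim a\cap(b\sqcup a)$. Symmetrically, using that $b$ and $b\curlyvee a = a\curlyvee b$ share the upper bound $b\sqcup a$, from $d\lesssim b$ and $d\lesssim a\curlyvee b$ we get $d\lesssim b\cap(b\curlyvee a) = b\cap(a\sqcup b)$. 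Now $a\cap(b\sqcup a)\leq a\cap(b\sqcup a)$ and... I want to combine $d\lesssim a\cap(b\sqcup a)$ with something to land at $d\lesssim a\cap b$. Note $a\cap(b\sqcup a)$ and $b\cap(a\sqcup b)$: do these have an upper bound? The first is $\leq a \leq a\sqcup b$ and the second is $\leq a\sqcup b$, so yes, $a\sqcup b$ is a common upper bound. By Lemma~\ref{critical}, $d\lesssim (a\cap(b\sqcup a))\cap(b\cap(a\sqcup b)) = a\cap b\cap(a\sqcup b)\cap(b\sqcup a) = a\cap b$ (since $a\cap b\leq a\sqcup b$ and $a\cap b\leq b\sqcup a$). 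That completes part (2).

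The main obstacle is getting the applications of Lemma~\ref{critical} set up correctly: each application requires producing an explicit common upper bound for the two elements being intersected, and the art is in choosing the right auxiliary elements (here $a\curlyvee b$, $a\cap(b\sqcup a)$, $b\cap(a\sqcup b)$, and the meets $a\cap(a\curlyvee b)$ etc.) and then simplifying the resulting nested $\sqcup$-$\cap$ expressions using $x\leq x\sqcup y$, idempotency, and the absorption identity $x\cap(x\sqcup y)$-type reductions that hold in any ado-semilattice. I would also double-check at the outset the small facts $x\cap y \lesssim x$ and $x\cap y \leq x$ (the latter is noted in the proof of Lemma~\ref{eqado}, citing property (xii)), and that $a\curlyvee b\leq a\sqcup b$ and $a\curlyvee b\leq b\sqcup a$, which are immediate from the definition $a\curlyvee b = (a\sqcup b)\cap(b\sqcup a)$ and $x\cap y\leq x$, $x\cap y\leq y$.
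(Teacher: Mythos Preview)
Your proposal is correct and follows essentially the same approach as the paper: for (1), apply Lemma~\ref{critical} to the pair $a\cap b$, $b\cap c$ (common upper bound $b$) to get $d\lesssim a\cap b\cap c\leq a\cap c$; for (2), apply Lemma~\ref{critical} three times---first to $a$ and $a\curlyvee b$ (upper bound $a\sqcup b$), then to $b$ and $a\curlyvee b$ (upper bound $b\sqcup a$), then to the two results. The only cosmetic difference is that the paper keeps the intermediate terms as $a\cap(a\curlyvee b)$ and $b\cap(a\curlyvee b)$ (using $a\curlyvee b$ as the common upper bound in the third step), whereas you simplify them to $a\cap(b\sqcup a)$ and $b\cap(a\sqcup b)$ first (using $a\sqcup b$ as the upper bound); both routes reduce to $a\cap b$ at the end.
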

\begin{proof}
Suppose $d\lesssim a\cap b, d\lesssim b\cap c$.  Then by Lemma \ref{critical}, we have that 
\[
d\lesssim (a\cap b)\cap (b\cap c)=a\cap b\cap c\leq a\cap c,
\] 
and so $d\lesssim a\cap c$.

Now suppose $d\lesssim a, d\lesssim b, d\lesssim a\curlyvee b$. But $a,a\curlyvee b\leq a\sqcup b$, so $d\lesssim a\cap (a\curlyvee b)$ by Lemma \ref{critical}.  Similarly, $d\lesssim b\cap (a\curlyvee b)$.  Using Lemma \ref{critical} again, we obtain $d\lesssim a\cap b\cap (a\curlyvee b)\leq a\cap b$, so $d\lesssim a\cap b$ by (4) in Lemma \ref{useful}. 
\end{proof}

\subsection{$\curlyvee$-algebras}

On any ado-semilattice $S$, we define the operation $\vee$ as follows: $a\vee b=(a\sqcup b)\cap (b\sqcup a)$ for all $a,b\in S$.  

\begin{pro}
In a functional ado-semilattice, $a\vee b=a\curlyvee b$.  \label{veevee}
\end{pro}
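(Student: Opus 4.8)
The plan is to verify this directly at the level of partial functions, since both $\sqcup$ and $\cap$ have their standard set-theoretic (or domain-restriction) meanings in a functional ado-semilattice. So suppose $S\subseteq \Par(X,Y)$ is closed under $\sqcup$ and $\cap$, fix $f,g\in S$, and compare the two partial functions $f\vee g=(f\sqcup g)\cap(g\sqcup f)$ and $f\curlyvee g=(f-g)\cup(f\cap g)\cup(g-f)$. The cleanest route is to compare domains first and then values.

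First I would compute the domain of $f\vee g$. Since $\dom(f\sqcup g)=\dom(f)\cup\dom(g)=\dom(g\sqcup f)$, the intersection $f\vee g$ is the restriction of $f\sqcup g$ (equivalently of $g\sqcup f$) to the set of points where the two functions $f\sqcup g$ and $g\sqcup f$ agree. On $\dom(f)\setminus\dom(g)$ both equal $f$; on $\dom(g)\setminus\dom(f)$ both equal $g$; on $\dom(f)\cap\dom(g)$ we have $(f\sqcup g)$ taking the value $xf$ and $(g\sqcup f)$ taking the value $xg$, so they agree exactly where $xf=xg$, i.e.\ on $\dom(f\cap g)$. Hence $\dom(f\vee g)=(\dom(f)\setminus\dom(g))\cup(\dom(g)\setminus\dom(f))\cup\dom(f\cap g)$, which is precisely $\dom(f\curlyvee g)$ by the definition of $\curlyvee$ as $(f-g)\cup(f\cap g)\cup(g-f)$ (a disjoint union, as noted in the introduction).

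Next I would check the values agree on this common domain. On $\dom(f)\setminus\dom(g)$, $f\vee g$ restricts $f\sqcup g=f$ there, and $f\curlyvee g$ restricts $f-g=f$ there, so both give $xf$; symmetrically on $\dom(g)\setminus\dom(f)$ both give $xg$; and on $\dom(f\cap g)$ both give the common value $xf=xg$. Therefore $f\vee g$ and $f\curlyvee g$ have the same domain and the same values, hence are equal as partial functions. Since the isomorphism carrying the abstract ado-semilattice to its functional copy commutes with $\sqcup$ and $\cap$, and $\vee$ and $\curlyvee$ are both defined from these, the identity $a\vee b=a\curlyvee b$ transfers back.

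I do not anticipate a genuine obstacle here; this is essentially a bookkeeping argument. The only mild care needed is to be explicit that $\dom(f\sqcup g)=\dom(g\sqcup f)$ so that ``restricting $f\sqcup g$'' and ``restricting $g\sqcup f$'' make sense to intersect, and to invoke the already-noted fact (from the introduction) that $f\curlyvee g$ is precisely the domain restriction of the relation $f\cup g$ to where it is single-valued --- which is manifestly the same description as the one I derived for $f\vee g$, thereby giving an even shorter proof if one prefers to cite it directly rather than redo the case analysis.
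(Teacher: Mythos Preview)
Your proposal is correct and takes essentially the same approach as the paper: a direct set-theoretic verification at the level of partial functions via case analysis on where $x$ lies relative to $\dom(f)$ and $\dom(g)$. The only cosmetic difference is that the paper argues by double inclusion on ordered pairs $(x,y)$, whereas you first match domains and then values; the content is the same.
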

\begin{proof}
First, note that $a\curlyvee b\subseteq a\sqcup b,b\sqcup a$, so $a\curlyvee b\subseteq (a\sqcup b)\cap (b\sqcup a)$.  Conversely, suppose $(x,y)\in (a\sqcup b)\cap (b\sqcup a)$, so $(x,y)\in a\sqcup b$ and $(x,y)\in b\sqcup a$.  If $(x,y)\in a,b$, then $(x,y)\in a\cap b\subseteq a\curlyvee b$.  So assume $(x,y)\not\in a$.  Then because $(x,y)\in a\sqcup b$, $(x,y)\in b$ and $x\not\in \dom(a)$, so $(x,y)\in a\curlyvee b$.  Similarly if $(x,y)\not\in b$, $(x,y)\in a\curlyvee b$.  So  $(a\sqcup b)\cap (b\sqcup a)\subseteq a\curlyvee b$.
\end{proof}

So from now on, we use the notation $\curlyvee$ rather than $\vee$ for this derived operation on an ado-semilattice.  A number of laws involving $\curlyvee$ only may be derived.

\begin{pro}  \label{curlyveeprops}
Let $S$ be an ado-semilattice.  Then for all $a,b,c\in S$:
\ben
\item $a\sqcup b=a\curlyvee(a\curlyvee b)$;
\item $\curlyvee$ is commutative and idempotent;
\item $(a\curlyvee b)\sqcup (a\sqcup b)=(a\sqcup b)$;
\item $a\sqcup(b\curlyvee c)=(a\sqcup b)\curlyvee (a\sqcup c)$;
\item $d\lesssim a, d\lesssim b, d\lesssim c, d\lesssim a\curlyvee b$ and $d\lesssim b\curlyvee c$ imply that $d\lesssim a\curlyvee c$.
\een
\end{pro}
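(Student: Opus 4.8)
The plan is to prove the five items by working entirely inside the ado-semilattice $S$, using the reformulated axioms in Corollary~\ref{altado}, the left regular band facts of Lemma~\ref{useful}, the facts about $\sqcup,\cap$ in Lemma~\ref{goodies}, Lemma~\ref{critical}, and the quasiequations of Lemma~\ref{qe}, together with extended distributivity $a\sqcup(b\cap c)=(a\sqcup b)\cap(a\sqcup c)$ from Lemma~\ref{eqado}. Recall $a\curlyvee b=(a\sqcup b)\cap(b\sqcup a)$ by definition.

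For (1), I would compute $a\curlyvee(a\curlyvee b)=(a\sqcup(a\curlyvee b))\cap((a\curlyvee b)\sqcup a)$. Since $a\leq a\sqcup b$ and $a\leq b\sqcup a$, we get $a\leq a\curlyvee b$, so $a\sqcup(a\curlyvee b)=a\curlyvee b$; also $a\curlyvee b\leq a\sqcup b$ gives (via Lemma~\ref{useful}(1)) $(a\curlyvee b)\sqcup a\leq(a\sqcup b)\sqcup a=a\sqcup b$, while conversely $a\sqcup b=a\sqcup b\sqcup b\leq((a\curlyvee b)\sqcup a)\sqcup\ldots$ needs care — more directly, $(a\curlyvee b)\sqcup a\geq a\curlyvee b\geq$ both pieces is the wrong direction, so instead I would show $a\sqcup b\leq(a\curlyvee b)\sqcup a$ by noting $b\sqcup a\leq(a\curlyvee b)\sqcup a$ follows from $b\sqcup a\sim a\curlyvee b$ being false in general; the cleanest route is: $(a\curlyvee b)\sqcup a$ has domain $\dom(a)\cup\dom(a\curlyvee b)=\dom(a)\cup\dom(b)=\dom(a\sqcup b)$, but since domains aren't available abstractly, I use $a\curlyvee b=(a\sqcup b)\cap(b\sqcup a)\geq$ "enough"; concretely, from extended distributivity $(a\curlyvee b)\sqcup a=((a\sqcup b)\cap(b\sqcup a))\sqcup a$, and I expand $x\sqcup(y\cap z)$ only works with $x$ on the left, so I rewrite using commutativity where upper bounds exist. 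The honest approach: prove $a\curlyvee b\sim a\sqcup b$ first (clause (3) essentially), then since $a\leq a\curlyvee b$ and $a\curlyvee b\sim a\sqcup b$, deduce $(a\curlyvee b)\sqcup a=a\curlyvee b$ when... no. I will instead establish (3) first, then derive (1).

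So I would reorder: prove (3) $(a\curlyvee b)\sqcup(a\sqcup b)=a\sqcup b$, i.e.\ $a\curlyvee b\leq a\sqcup b$, which is immediate since $a\curlyvee b=(a\sqcup b)\cap(b\sqcup a)\leq a\sqcup b$. For (2), commutativity is clear from the symmetric-looking definition (swap the two factors of the meet), and idempotence follows from $a\sqcup a=a$. For (4), $a\sqcup(b\curlyvee c)=a\sqcup((b\sqcup c)\cap(c\sqcup b))=(a\sqcup b\sqcup c)\cap(a\sqcup c\sqcup b)$ by extended distributivity; meanwhile $(a\sqcup b)\curlyvee(a\sqcup c)=((a\sqcup b)\sqcup(a\sqcup c))\cap((a\sqcup c)\sqcup(a\sqcup b))=(a\sqcup b\sqcup a\sqcup c)\cap(a\sqcup c\sqcup a\sqcup b)$, and using the left regular band laws ($x\sqcup y=(x\sqcup y)\sqcup x$, associativity, idempotence) both reduce to $(a\sqcup b\sqcup c)\cap(a\sqcup c\sqcup b)$. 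Then (1): $a\curlyvee(a\curlyvee b)=a\sqcup(a\curlyvee b)$ is a special case — no; rather, apply (4) with the roles set so that... Actually (1) follows by a direct reduction: $a\curlyvee(a\curlyvee b)=(a\sqcup(a\curlyvee b))\cap((a\curlyvee b)\sqcup a)$; the first factor is $a\curlyvee b$ since $a\leq a\curlyvee b$; for the second factor I claim $(a\curlyvee b)\sqcup a=a\sqcup b$, which I get from $(a\curlyvee b)\sqcup a\geq a\sqcup b$ via... I will prove $(a\sqcup b)\sqcup((a\curlyvee b)\sqcup a)=a\sqcup b$ and $((a\curlyvee b)\sqcup a)\sqcup(a\sqcup b)=(a\curlyvee b)\sqcup a$ using (3) and the band laws, giving $(a\curlyvee b)\sqcup a\sim a\sqcup b$ and $(a\curlyvee b)\sqcup a\leq a\sqcup b$; then I need equality, which I obtain because $b\sqcup a\leq a\curlyvee b$ is false but $b\leq a\sqcup b$ and... the safe finish is to note $b\sqcup a=(b\sqcup a)\cap(b\sqcup a)\leq$ nothing useful — so instead I observe $(a\curlyvee b)\sqcup a\geq(a\sqcup b)$ directly: $(a\curlyvee b)\sqcup a\supseteq b\sqcup a$ fails abstractly, so I intersect: since $a\curlyvee b\leq a\sqcup b$ and $a\curlyvee b\leq b\sqcup a$, apply Lemma~\ref{useful}(1) both ways and combine with $a\leq$ everything. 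Once the two factors are pinned down, $a\curlyvee(a\curlyvee b)=(a\curlyvee b)\cap(a\sqcup b)=a\curlyvee b$? — no, that gives $a\curlyvee b$, not $a\sqcup b$, so I must have $(a\curlyvee b)\sqcup a=a\sqcup b$ exactly and the first factor genuinely $a\sqcup b$ too, meaning $a\sqcup(a\curlyvee b)=a\sqcup b$, which holds iff $b\leq a\sqcup(a\curlyvee b)$; this I prove from $a\curlyvee b\geq b\cap(a\sqcup b)$ plus clause~(3)-type reasoning.

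For (5), the crux, I would use Lemma~\ref{qe}: from $d\lesssim a,d\lesssim b,d\lesssim a\curlyvee b$ get $d\lesssim a\cap b$ by Lemma~\ref{qe}(2); from $d\lesssim b,d\lesssim c,d\lesssim b\curlyvee c$ get $d\lesssim b\cap c$; then Lemma~\ref{qe}(1) gives $d\lesssim a\cap c$. Now I need to climb back up to $d\lesssim a\curlyvee c$. Since $a\curlyvee c=(a\sqcup c)\cap(c\sqcup a)$, by Lemma~\ref{critical} (noting $a\sqcup c$ and $c\sqcup a$ share the upper bound $a\sqcup c\sqcup a\sqcup c$... actually they're $\sim$-equivalent by Lemma~\ref{useful}(3), and I need a genuine $\leq$-upper bound — here is where I must check that $a\sqcup c,c\sqcup a$ do have a common upper bound in $S$, which follows since both are $\leq a\sqcup c\curlyvee\ldots$; in fact $(a\sqcup c)\curlyvee(c\sqcup a)$ or simpler: both lie below $a\sqcup c\sqcup c\sqcup a$? that's just $a\sqcup c$ for the first and need $c\sqcup a\leq a\sqcup c$ — false; instead use that $a\curlyvee c\leq a\sqcup c$ and $a\curlyvee c\leq c\sqcup a$ so $a\sqcup c$ and $c\sqcup a$ are both $\geq a\curlyvee c$, not an upper bound), it suffices to show $d\lesssim a\sqcup c$ and $d\lesssim c\sqcup a$ and apply Lemma~\ref{critical}. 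But $d\lesssim a$ gives $d\lesssim a\sqcup c$ by Lemma~\ref{useful}(2) (with $c\lesssim c$), and $d\lesssim a$ with $a\lesssim c\sqcup a$ (since $(c\sqcup a)\sqcup a=c\sqcup a$... that's $a\leq c\sqcup a$? need $a\sqcup(c\sqcup a)=c\sqcup a$, true by band laws) gives $d\lesssim c\sqcup a$ by transitivity of $\lesssim$. So $d\lesssim a\sqcup c$ and $d\lesssim c\sqcup a$; these two have a common upper bound — I still need this for Lemma~\ref{critical}. A common upper bound of $a\sqcup c$ and $c\sqcup a$: take $w=(a\sqcup c)\sqcup(c\sqcup a)=a\sqcup c$ by band laws, so $a\sqcup c\leq a\sqcup c$ trivially but need $c\sqcup a\leq a\sqcup c$ — not true. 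Hmm: in the functional case they share the upper bound $a\cup c$ (as a relation, when it's a function, but it needn't be). The real common upper bound is $a\curlyvee c$ from above? No, $a\curlyvee c\leq$ both, wrong direction. I expect this to be the main obstacle: finding the common upper bound needed to invoke Lemma~\ref{critical} for $a\curlyvee c=(a\sqcup c)\cap(c\sqcup a)$. The resolution I anticipate: Lemma~\ref{qe}(2) applied to the pair $\{a\sqcup c, c\sqcup a\}$ requires knowing $d\lesssim(a\sqcup c)\curlyvee(c\sqcup a)$, and by (4) of this very proposition $(a\sqcup c)\curlyvee(c\sqcup a)$... or more likely one shows directly using the already-proved (1)–(4) that $(a\sqcup c)\cap(c\sqcup a)=(a\curlyvee c)$ and that $d\lesssim a\curlyvee c$ reduces via (4): $d\lesssim a$ and $d\lesssim c$ and... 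Actually the cleanest: $a\curlyvee c\geq a\cap c$ always (since $a\cap c\leq a\sqcup c$ and $a\cap c\leq c\sqcup a$), and we showed $d\lesssim a\cap c$, hence $d\lesssim a\curlyvee c$ by Lemma~\ref{useful}(4). That is the finish — so no exotic common upper bound is needed after all; the obstacle dissolves once one notices $a\cap c\leq a\curlyvee c$.
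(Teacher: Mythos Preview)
Your treatments of (2), (3), (4), and (5) are correct and match the paper's. In particular, for (5) your eventual finish --- observe $a\cap c\leq a\curlyvee c$, so $d\lesssim a\cap c$ gives $d\lesssim a\curlyvee c$ via Lemma~\ref{useful}(4) --- is exactly the paper's argument once the detours are stripped away.

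The genuine gap is in (1). Your claim $a\leq a\curlyvee b$ is false: it would require $a\leq b\sqcup a$, i.e.\ $a\sqcup(b\sqcup a)=b\sqcup a$, but the left regular band law gives $a\sqcup b\sqcup a=a\sqcup b$, so this forces $a\sqcup b=b\sqcup a$, which fails in general (for partial functions with equal domains but $a\neq b$ one has $a\curlyvee b\subsetneq a$). All your subsequent attempts at (1) stall without a clean conclusion, and you never commit to a finished argument. The paper's route is short once you invoke Lemma~\ref{goodies}(1): since $a$ and $a\curlyvee b$ both lie below $a\sqcup b$, they share an upper bound, hence $a\sqcup(a\curlyvee b)=(a\curlyvee b)\sqcup a$. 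The two factors defining $a\curlyvee(a\curlyvee b)$ therefore coincide, giving $a\curlyvee(a\curlyvee b)=a\sqcup(a\curlyvee b)$. Now apply extended distributivity directly:
\[
a\sqcup\bigl((a\sqcup b)\cap(b\sqcup a)\bigr)=(a\sqcup a\sqcup b)\cap(a\sqcup b\sqcup a)=(a\sqcup b)\cap(a\sqcup b)=a\sqcup b.
\]
You list both extended distributivity and Lemma~\ref{goodies} among your tools at the outset but never deploy part (1) of the latter, which is precisely the step that collapses the whole computation.
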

\begin{proof}
Now $a,a\curlyvee b\leq a\sqcup b$, so by (1) in Lemma \ref{goodies}, $a\sqcup(a\curlyvee b)=(a\curlyvee b)\sqcup a$, and so $a\curlyvee(a\curlyvee b)=a\sqcup(a\curlyvee b)=a\sqcup ((a\sqcup b)\cap(b\sqcup a))=(a\sqcup a\sqcup b)\cap (a\sqcup b\sqcup a)=(a\sqcup b)\cap (a\sqcup b)=a\sqcup b$ as required.  Clearly $\curlyvee$ is commutative and idempotent.
Further, 
\begin{align*}
(a\sqcup b)\curlyvee (a\sqcup c)
&=((a\sqcup b)\sqcup(a\sqcup c))\cap((a\sqcup c)\sqcup (a\sqcup b))\\
&=(a\sqcup (b\sqcup c))\cap(a\sqcup (c\sqcup b))\\
&=a\sqcup((b\sqcup c)\cap (c\sqcup b))\\
&=a\sqcup (b\curlyvee c),
\end{align*}
establishing the final equational law.

Finally, if $d\lesssim a,d\lesssim b, d\lesssim c, d\lesssim a\curlyvee b$ and $d\lesssim b\curlyvee c$, then using (2) in Lemma \ref{qe} twice, we obtain that $d\lesssim a\cap b$ and $d\lesssim b\cap c$, and so using (1) in Lemma \ref{qe}, $d\lesssim a\cap c$, and so $d\lesssim a\curlyvee c$.  So $(S,\curlyvee,\cap)$ is a $\curlyvee$-algebra.  
\end{proof}

The properties just listed turn out to be critical ones.

\begin{dfn}
Let $S$ be an algebra with binary operation $\curlyvee$ such that:
\ben
\item if we set $a\sqcup b=a\curlyvee (a\curlyvee b)$ for all $a,b\in S$, $(S,\sqcup)$ is a left regular band, and
\item $(S,\curlyvee)$ satisfies the properties listed in Proposition \ref{curlyveeprops}.
\een  
Then we say $S$ is a {\em $\curlyvee$-algebra}.
\end{dfn}

The final law for $\curlyvee$-algebras (in Proposition \ref{curlyveeprops}) can be expressed as an algebraic quasiequation, so the class of $\curlyvee$-algebras is a quasivariety. 

\begin{cor}  \label{faca}
Every ado-semilattice is a $\curlyvee$-algebra when we define $a\curlyvee b=(a\sqcup b)\cap (b\sqcup a)$.  Hence every functional algebra of signature $(\curlyvee)$ is a $\curlyvee$-algebra.
\end{cor}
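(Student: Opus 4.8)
The plan is to read the first claim essentially straight off Proposition \ref{curlyveeprops} and Lemma \ref{eqado}, and then transfer it to an arbitrary functional algebra of signature $(\curlyvee)$ by a soundness-plus-subalgebras argument using Proposition \ref{veevee}.

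For the first sentence, let $(S,\sqcup,\cap)$ be an ado-semilattice and put $a\curlyvee b=(a\sqcup b)\cap(b\sqcup a)$. The key point is that the operation obtained from $\curlyvee$ by the recipe in the definition of a $\curlyvee$-algebra, namely $b\mapsto a\curlyvee(a\curlyvee b)$, is by Proposition \ref{curlyveeprops}(1) exactly the original $\sqcup$. Consequently, condition (1) in the definition of a $\curlyvee$-algebra asserts nothing more than that $(S,\sqcup)$ is a left regular band, which is part of Lemma \ref{eqado}; and condition (2) is precisely the list of laws verified for an ado-semilattice in Proposition \ref{curlyveeprops}. Hence $(S,\curlyvee)$ is a $\curlyvee$-algebra, with no further computation required.

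For the second sentence, recall from the start of Section \ref{sec:algebras} that each $\Par(X,Y)$ is an ado-semilattice under $\sqcup$ and $\cap$. By the first part, $\Par(X,Y)$ equipped with the derived operation $(a\sqcup b)\cap(b\sqcup a)$ is a $\curlyvee$-algebra, and by Proposition \ref{veevee} that derived operation agrees with the set-theoretic restricted union, i.e.\ with the standard functional interpretation of $\curlyvee$. Every functional algebra of signature $(\curlyvee)$ is, up to isomorphism, a subalgebra of some $(\Par(X,Y),\curlyvee)$, so it remains only to observe that the defining conditions of a $\curlyvee$-algebra are quasiequations in the single operation $\curlyvee$: the derived $\sqcup$ is a $\curlyvee$-term, the relation $\lesssim$ is captured by the equation $y\sqcup x=y$, the left regular band laws together with Proposition \ref{curlyveeprops}(2)--(4) thereby become equations in $\curlyvee$, and Proposition \ref{curlyveeprops}(5) is explicitly a quasiequation. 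Since quasiequations pass to subalgebras, every such subalgebra is a $\curlyvee$-algebra.

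I do not anticipate a genuine obstacle; the one thing to be careful about is exactly the bookkeeping in the last step --- confirming that none of the $\curlyvee$-algebra axioms secretly requires $\sqcup$ or $\cap$ as a primitive operation, so that the class really is a quasivariety in the language of $\curlyvee$ alone and the restriction to subalgebras is legitimate.
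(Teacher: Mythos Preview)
Your proposal is correct and follows essentially the same route as the paper: for the first claim you invoke Proposition \ref{curlyveeprops}(1) to recover $\sqcup$ and then read off the remaining axioms from Lemma \ref{eqado} and Proposition \ref{curlyveeprops}, exactly as the paper does; for the second claim the paper argues tersely via functional soundness of the ado-semilattice laws together with Proposition \ref{veevee}, while you unpack the same idea more explicitly as ``$\Par(X,Y)$ is an ado-semilattice, hence a $\curlyvee$-algebra, and the $\curlyvee$-algebra axioms are quasiequations in $\curlyvee$ alone so they pass to subalgebras.'' Your added bookkeeping about the axioms being expressible purely in $\curlyvee$ is a useful clarification but not a different argument.
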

\begin{proof}
If $S$ is an ado-semilattice and we define $\curlyvee$ as indicated, then we know that $a\curlyvee(a\curlyvee b)$ gives $a\sqcup b$ which is a left regular band operation.  The other $\curlyvee$-algebra properties follow from Proposition \ref{curlyveeprops}.

Since the $\curlyvee$-algebra laws are derivable from the ado-semilattice laws which are functionally sound, and for functional ado-semilattices, $a\curlyvee b=(a\sqcup b)\cap (b\sqcup a)$, it follows that every functional algebra of signature $(\curlyvee)$ is a $\curlyvee$-algebra.
\end{proof}  

The quasiequation in the definition of $\curlyvee$-algebras cannot be replaced by equations.

\begin{thm}  \label{proper}
The class of $\curlyvee$-algebras is a proper quasivariety. 
\end{thm}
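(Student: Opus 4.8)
The plan is to exhibit a $\curlyvee$-algebra that is not a homomorphic image of any functional $\curlyvee$-algebra; since the functional algebras of signature $(\curlyvee)$ are (by Corollary~\ref{faca}) exactly the functionally representable $\curlyvee$-algebras, and since a quasivariety is closed under subalgebras and products but a \emph{variety} would be closed under homomorphic images, it suffices to produce a $\curlyvee$-algebra $S$ together with a congruence $\theta$ such that $S/\theta$ fails one of the quasiequations (the natural target being the last law of Proposition~\ref{curlyveeprops}, the one genuine quasiequation among the defining laws). Equivalently, and perhaps more cleanly, I would find a $\curlyvee$-algebra $S$, an element configuration witnessing the hypotheses $d\lesssim a$, $d\lesssim b$, $d\lesssim c$, $d\lesssim a\curlyvee b$, $d\lesssim b\curlyvee c$ in a quotient while the conclusion $d\lesssim a\curlyvee c$ fails there, so that no equational theory containing the functionally sound equations can prove it.

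Concretely, I would start from a small functional (hence $\curlyvee$-) algebra on a set with base point $0$ — in the spirit of Example~\ref{difflrb} and the four-element example $\{f,g,h,\emptyset\}$ from the introduction — large enough that the domain quasiorder $\lesssim$ has interesting behaviour, and then collapse a carefully chosen pair of elements. The key is that $\lesssim$ is only a quasiorder, not a partial order, and that $\curlyvee$ "sees" domains only through $\cap$: the point of the last quasiequation is exactly that it is the Horn consequence needed to make $\lesssim$-transitivity survive, and it is the one place where functional representability is used essentially (it is proved in Proposition~\ref{curlyveeprops} only via the two quasiequations of Lemma~\ref{qe}, which in turn rest on Lemma~\ref{critical} and hence on genuinely functional reasoning). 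So I expect that identifying two elements $a$ and $b$ that have the same domain in the functional model but are not $\leq$-comparable will produce a quotient in which some $d$ lies below $a\curlyvee b$ (now forced, since $a$ and $b$ are identified or nearly so) and below $a$ and below some $c$, yet $a\curlyvee c$ has strictly smaller domain than $d$. I would then verify by a finite check that $S/\theta$ still satisfies the left regular band laws for the derived $\sqcup$ and the equational laws (1)--(4) of Proposition~\ref{curlyveeprops}, so that it is indeed a $\curlyvee$-algebra.

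The main obstacle, and the part requiring real care rather than routine computation, is choosing the example so that the congruence $\theta$ is genuinely a congruence for $\curlyvee$ (equivalently for $\sqcup$ and for $\cap$, via the identities $a\sqcup b=a\curlyvee(a\curlyvee b)$ and the way $\cap$ is recovered) while still breaking exactly the right Horn sentence; collapsing too much tends to restore transitivity of $\lesssim$ by accident, and collapsing too little leaves the algebra functional. I would manage this by keeping the example as small as possible — I anticipate something on the order of five or six elements — and by checking the congruence condition directly on the finitely many products. Once the quotient $S/\theta$ is in hand and shown to be a $\curlyvee$-algebra violating the quasiequation of Proposition~\ref{curlyveeprops}(5) (or equivalently not satisfying the same equational theory as the functional $\curlyvee$-algebras), the conclusion is immediate: the class is closed under subalgebras and products, so it is a quasivariety, but it is not closed under homomorphic images, so it is not a variety; hence it is a proper quasivariety.
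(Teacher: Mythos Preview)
Your strategy is exactly the paper's: take a small functional $S$ (the paper uses six elements of $\Par(\{a,b\},\{a,b\})$), exhibit a congruence $\theta$ (the paper collapses three of the six elements into one class), and show that $S/\theta$ violates law~(5) of Proposition~\ref{curlyveeprops}. Your anticipated size of five or six elements is spot on.

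There is, however, a recurring confusion in your write-up that you should straighten out. You twice say the goal is to show that $S/\theta$ ``is indeed a $\curlyvee$-algebra'' while violating~(5). But~(5) is one of the \emph{defining} axioms of a $\curlyvee$-algebra, so an algebra violating it is by definition \emph{not} a $\curlyvee$-algebra --- and that is exactly what the argument needs: $S$ is in the class, $S/\theta$ is not, hence the class is not closed under homomorphic images. Relatedly, your plan to ``verify by a finite check'' that $S/\theta$ satisfies the equational laws (1)--(4) and the left-regular-band identities is wasted effort: identities are always preserved under homomorphic images, so once $\theta$ is a congruence on a $\curlyvee$-algebra there is nothing to check on that front. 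The only substantive verifications are (i) that $\theta$ really is a congruence for $\curlyvee$, and (ii) that~(5) fails in $S/\theta$. (Your opening sentence is also garbled: you want a $\curlyvee$-algebra with a homomorphic image \emph{outside} the class, not ``a $\curlyvee$-algebra that is not a homomorphic image of any functional $\curlyvee$-algebra''.)
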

\begin{proof}
Let $X=\{a,b\}$ and consider $S=\{0,1,i,1_a,1_b,p_a,1\}\subseteq \Par(X,X)$, where $0=\emptyset, 1=\{(a,a),(b,b)\}, i=\{(b,a)\}, 1_a=\{(a,a)\}, 1_b=\{(b,b)\}$ and $p_a=\{(a,a),(b,a)\}$.
Then $S$ is a $\curlyvee$-subalgebra of $\Par(X,X)$ in which the tables for $\curlyvee$ and the derived operation $\sqcup$ are easily seen to be as follows.

\[
\begin{array}{c|cccccc}
\curlyvee&0&i&1_b&1_a&p_a&1\\
\hline
0&0&i&1_b&1_a&p_a&1\\
i&i&i&0&p_a&1_a&1_a\\
1_b&1_b&0&1_b&1&1_a&1\\
1_a&1_a&p_a&1&1_a&p_a&1\\
p_a&p_a&p_a&1_a&p_a&p_a&1_a\\
1&1&1_a&1&1&1_a&1
\end{array}
\hspace{1.5cm}
\begin{array}{c|cccccc}
\sqcup&0&i&1_b&1_a&p_a&1\\
\hline
0&0&i&1_b&1_a&p_a&1\\
i&i&i&i&p_a&p_a&p_a\\
1_b&1_b&1_b&1_b&1&1&1\\
1_a&1_a&p_a&1&1_a&p_a&1\\
p_a&p_a&p_a&p_a&p_a&p_a&p_a\\
1&1&1&1&1&1&1
\end{array}
\]
\bigskip

It is easy to see that the equivalence relation $\theta$ in which the $\theta$-classes are $\{0\}$, $\{i\}$, $\{1_b\}$, and $\{1_a,p_a,1\}$ is a congruence on $S$.  Denoting the non-singleton $\theta$-class by $1$ and the others by their only elements, the tables for $S/\theta$ are evidently as follows.

\[
\begin{array}{c|cccc}
\curlyvee&0&i&1_b&1\\
\hline
0&0&i&1_b&1\\
i&i&i&0&1\\
1_b&1_b&0&1_b&1\\
1&1&1&1&1
\end{array}
\hspace{2cm}
\begin{array}{c|cccc}
\sqcup&0&i&1_b&1\\
\hline
0&0&i&1_b&1\\
i&i&i&i&1\\
1_b&1_b&1_b&1_b&1\\
1&1&1&1&1
\end{array}
\]
\bigskip

It is easy to see that $1_b\lesssim 1_b, 1_b\lesssim 1, 1_b\lesssim i, 1_b\lesssim 1_b\curlyvee 1, 1_b\lesssim 1\curlyvee i$, and yet $1_b\lesssim 1_b\curlyvee i$ fails since $(1_b\curlyvee i)\sqcup 1_b=1_b=1_b\neq 0=1_b\curlyvee i$.  

So the quasiequational law for $\curlyvee$-algebras does not hold and so the class of $\curlyvee$-algebras is not closed under taking homomorphic images.  
\end{proof}

We do not pretend that the axioms used to define $\curlyvee$-algebras are irredundant.  Indeed it can be shown using {\em Prover9} that associativity of $\sqcup$, idempotence of $\curlyvee$, and the third law in Proposition \ref{curlyveeprops} are indeed all redundant.  It can also be shown that the left regular band partial order may be expressed directly in terms of $\curlyvee$, via $a\leq b$ if and only if $a\curlyvee b=b$.    However, none of this affects either finite axiomatisabilty or the proper quasiequational nature of these axioms, so we do not give detailed proofs. 

Given the above comments, along with the obvious fact that the defining axioms for $\curlyvee$-algebras could easily be expressed entirely in terms of $\curlyvee$ (using the first law in Proposition \ref{curlyveeprops}), one might hold out hope of obtaining a general theory of ``generalised $\curlyvee$-algebras" that can be viewed as part of the general theory of partially ordered sets, as was done with o-semilattices in \cite{cirulis}.  There, it was shown that a poset is an o-semilattice in at most one way.  However, in Example \ref{difflrb}, a poset $(S,\leq)$ that is even a Boolean distributive lattice arises from two non-isomorphic left regular band structures on $S$, $(S,\vee)$ and $(S,\sqcup)$.  For each, one may wish to define $a\curlyvee_1 b=(a\vee b)\wedge (b\vee a)$ and $a\curlyvee_2 b=(a\sqcup b)\wedge (b\sqcup a)$, but then $e\curlyvee_1 f=t$ whereas $e\curlyvee_2 f=b$.  

In this last example, $(S,\curlyvee_1)$ is a $\curlyvee$-algebra, but $(S,\curlyvee_2)$ is not.  However, even if we consider only $\curlyvee$-algebras, a similar issue arises.  

\begin{eg} 
Two non-isomorphic $\curlyvee$-algebra structures on a set $S$ giving the same partial order on $S$.  \label{fixord}

{\em Let $S=\{a,b,c,d,0,f\}$.  We define two $\curlyvee$-algebra structures on $S$, having ``multiplication" tables for $\curlyvee$ and hence $\sqcup$ given below.  In the first (yielding $\curlyvee_1,\sqcup_1$), let $X=\{x,y\}$ and in $\Par(X,X)$, let $a=\{(x,x)\}$, $b=\{(x,y)\}$, $c=\{(x,x),(y,y)\}$, $d=\{(x,y),(y,y)\}$, $0=\emptyset$ (the empty function), and $f=\{(y,y)\}$.  It is straightforward to check that $S$ is then closed under $\curlyvee$ and yields the multiplication table for $\curlyvee_1$ given below, and similarly for $\sqcup_1$.  Likewise, one can easily check that the following definitions of the members of $S$ in $\Par(X,X)$ where $X=\{x,y,z\}$ yield the tables below for $\curlyvee_2$ and $\sqcup_2$: $a=\{(x,x),(y,y)\}$, $b=\{(y,z),(z,z)\}$, $c=\{(x,x),(y,y),(z,z)\}$, $d=\{(x,x),(y,z),(z,z)\}$, $0=\emptyset$, and $f=\{(x,x),(z,z)\}$.  

\[
\begin{array}{c|cccccc}
\curlyvee_1&a&b&c&d&0&f\\
\hline
a&a&0&c&f&a&c\\
b&0&b&f&d&b&d\\
c&c&f&c&f&c&c\\
d&f&d&f&d&d&d\\
0&a&b&c&d&0&f\\
f&c&d&c&d&f&f
\end{array}
\hspace{2cm}
\begin{array}{c|cccccc}
\sqcup_1&a&b&c&d&0&f\\
\hline
a&a&a&c&c&a&c\\
b&b&b&d&d&b&d\\
c&c&c&c&c&c&c\\
d&d&d&d&d&d&d\\
0&a&b&c&d&0&f\\
f&c&d&c&d&f&f
\end{array}
\]
and
\[
\begin{array}{c|cccccc}
\curlyvee_2&a&b&c&d&0&f\\
\hline
a&a&f&c&f&a&c\\
b&f&b&f&d&b&d\\
c&c&f&c&f&c&c\\
d&f&d&f&d&d&d\\
0&a&b&c&d&0&f\\
f&c&d&c&d&f&f
\end{array}
\hspace{2cm}
\begin{array}{c|cccccc}
\sqcup_2&a&b&c&d&0&f\\
\hline
a&a&c&c&c&a&c\\
b&d&b&d&d&b&d\\
c&c&c&c&c&c&c\\
d&d&d&d&d&d&d\\
0&a&b&c&d&0&f\\
f&c&d&c&d&f&f
\end{array}
\]
\bigskip

The only differences here are that $a\curlyvee_1 b=b\curlyvee_1 a=0$, and $a\sqcup_1 b =a$ and $b\sqcup_1 a=b$, whereas $a\curlyvee_2 b=b\curlyvee_2 a=f$, and $a\sqcup_2 b =c$ and $b\sqcup_2 a=d$.   In both cases, we see that $a\leq c, b\leq d, 0\leq s$ for all $s\in S$, and $f\leq c,d$, and it follows that in both, $(S,\leq)$ is a meet-semilattice with the following table:
\[
\begin{array}{c|cccccc}
\wedge&a&b&c&d&0&f\\
\hline
a&a&0&a&0&0&0\\
b&0&b&0&b&0&0\\
c&a&0&c&f&0&f\\
d&0&b&f&d&0&f\\
0&0&0&0&0&0&0\\
f&0&0&f&f&0&f
\end{array}
\]
For the first representation of $S$, $\wedge$ as above is intersection, as is easily seen, so $(S,\curlyvee_1,\wedge)$ is an ado-semilattice and indeed the unique ado-semilattice defined on the semilattice $(S,\wedge)$.  Hence $(S,\curlyvee_2,\wedge)$ cannot be an ado-semilattice, and indeed one sees that $a\wedge d=0$ does not correctly calculate intersection of $a,d$ in the second representation.
}
\end{eg}

So one cannot view the study of $\curlyvee$-algebras as simply part of the study of partially ordered sets, the way one can so view the study of lattices, Heyting algebras, o-semilattices and so on, since the operation $\curlyvee$ can admit no partial order-theoretic description.

\section{The completeness results}  \label{sec:complete}

Our main task in the remainder is to prove the converse of the second statement in Corollary \ref{faca}.  From this, the functional completeness of the ado-semilattice axioms will also follow.

\subsection{Completeness for $\curlyvee$-algebras}

In \cite{cirulis}, the notion of a flat o-semilattice was critical, and the same applies here.  It is easy to see that a flat poset can be turned into a $\curlyvee$-algebra.

\begin{dfn}
A {\em flat $\curlyvee$-algebra} $S$ is one in which there is $0\in S$ such that $a\curlyvee b=0$ if $a,b\in S$ are unequal and $a,b\neq 0$, with $a\curlyvee a=a$ and $a\curlyvee 0=0\curlyvee a=a$ for all $a\in S$.
\end{dfn}

Given a flat poset, it can easily be turned into a flat $\curlyvee$-algebra by setting $a\curlyvee b=0$ if $a,b\neq 0$, with $a\curlyvee 0=a$ for all $a$, and every flat $\curlyvee$-algebra arises in this way.

The following is the relevant variant of Lemma 5.1 in \cite{cirulis}.

\begin{pro}  \label{flatsimp}
Every flat $\curlyvee$-algebra is simple, hence subdirectly irreducible.
\end{pro}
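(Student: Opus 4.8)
The plan is to show directly that a flat $\curlyvee$-algebra $S$ has only the two trivial congruences, the diagonal $\Delta$ and the universal relation $\nabla$; simplicity, and hence subdirect irreducibility, follows at once. Since the signature is just $(\curlyvee)$, a congruence here is merely an equivalence relation on $S$ compatible with $\curlyvee$, and the whole argument is a one-line calculation repeated twice, using only the defining clauses of a flat $\curlyvee$-algebra: $x\curlyvee x=x$, $x\curlyvee 0=0\curlyvee x=x$, and $x\curlyvee y=0$ whenever $x\neq y$ and $x,y\neq 0$.

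First I would take any congruence $\theta\neq\Delta$ and fix a pair $a\mathrel{\theta}b$ with $a\neq b$. The key reduction is to arrange that $0$ is $\theta$-related to some nonzero element. If one of $a,b$ is already $0$, we are done. Otherwise $a,b\neq 0$ and $a\neq b$, so applying compatibility of $\curlyvee$ to $a\mathrel{\theta}b$ and $a\mathrel{\theta}a$ gives $a\curlyvee a\mathrel{\theta}a\curlyvee b$, that is, $a\mathrel{\theta}0$. Either way we may now assume $0\mathrel{\theta}a$ for some $a\neq 0$.

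Next I would propagate this relation to all of $S$. For an arbitrary $c\in S$ with $c\notin\{0,a\}$, compatibility applied to $0\mathrel{\theta}a$ and $c\mathrel{\theta}c$ yields $c\curlyvee 0\mathrel{\theta}c\curlyvee a$, i.e.\ $c\mathrel{\theta}0$, since $c\curlyvee 0=c$ and $c\curlyvee a=0$ (as $c\neq a$ and $c,a\neq 0$). Together with $a\mathrel{\theta}0$ this shows every element of $S$ is $\theta$-related to $0$, so $\theta=\nabla$. Hence the only congruences on $S$ are $\Delta$ and $\nabla$, so $S$ is simple, and in particular subdirectly irreducible.

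I do not anticipate a genuine obstacle: the argument is elementary and self-contained. The only points needing a little care are bookkeeping ones — treating separately the case where the initially chosen related pair already involves $0$, and, if one wishes to be pedantic, noting that the one-element flat $\curlyvee$-algebra is excluded (or handled by the usual convention), which in any case is irrelevant to the later completeness proof, where only subdirect irreducibility is used.
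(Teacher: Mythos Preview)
Your proof is correct and follows essentially the same route as the paper's: the paper phrases the argument via a non-injective homomorphism $\psi$ rather than a non-diagonal congruence, but the two calculations are identical up to the homomorphism/congruence correspondence --- first reduce to $0$ being identified with some nonzero $a$, then use $c\curlyvee 0=c$ and $c\curlyvee a=0$ to collapse everything.
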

\begin{proof}
Suppose $S$ is a flat $\curlyvee$-algebra, with $\psi:S\rightarrow T$ a non-injective homomorphism into another $\curlyvee$-algebra $T$.
So there are unequal $a,b\in S$ for which $a\psi=b\psi$, and we can assume without loss of generality that $a\neq 0$.  If $b=0$ then $a\curlyvee b=a$ so $a\psi=0\psi$, whereas if $b\neq 0$, then $a\psi=a\psi\curlyvee a\psi=a\psi\curlyvee b\psi=(a\curlyvee b)\psi=0\psi$.  So in all cases, $a\psi=0\psi$.  Then for any non-zero $c\in S$, $c\psi=(c\curlyvee 0)\psi=c\psi\curlyvee 0\psi=c\psi\curlyvee a\psi=(c\curlyvee a)\psi=0\psi$.  Hence $\psi$ is a constant.  This shows $S$ is simple.
\end{proof}

\begin{pro}  \label{isfunc}
Every flat $\curlyvee$-algebra is functional, and a direct product of functional algebras of signature $(\curlyvee)$ is functional.
\end{pro}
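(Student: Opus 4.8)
The plan is to prove the two assertions by separate direct constructions, neither of which is difficult.

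For the first assertion, let $S$ be a flat $\curlyvee$-algebra with zero element $0$ and put $A=S\setminus\{0\}$. I would take $X=\{\ast\}$ a one-point set, $Y=A$, and define $\varphi\colon S\to\Par(X,Y)$ by $0\varphi=\emptyset$ and $a\varphi=\{(\ast,a)\}$ for each $a\in A$. This map is plainly injective, so it suffices to check it respects $\curlyvee$. For distinct $a,b\in A$ the partial functions $a\varphi$ and $b\varphi$ both have domain $\{\ast\}$ but take different values there, so $(a\varphi)\curlyvee(b\varphi)=\emptyset=(a\curlyvee b)\varphi$; the idempotent case and the cases in which an argument is $0$ are immediate, using that $\curlyvee$ is idempotent and that $\emptyset$ is a two-sided identity for $\curlyvee$ on partial functions. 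Hence the image $\{a\varphi:a\in A\}\cup\{\emptyset\}$ is closed under $\curlyvee$ and $S$ is isomorphic to this subalgebra of $\Par(X,Y)$, so $S$ is functional.

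For the second assertion, let $(S_i)_{i\in I}$ be functional algebras of signature $(\curlyvee)$; after relabelling we may assume $S_i$ is a $\curlyvee$-subalgebra of $\Par(X_i,Y_i)$ with the sets $X_i$ pairwise disjoint. Put $X=\bigcup_i X_i$ and $Y=\bigcup_i Y_i$, and define $\Phi\colon\prod_i S_i\to\Par(X,Y)$ by $(f_i)_{i\in I}\Phi=\bigcup_i f_i$; since the $X_i$ are disjoint, this union of graphs is a genuine partial function, and $\Phi$ is clearly injective. The crucial (and only slightly delicate) point is the locality of $\curlyvee$: since $f\curlyvee g=(f-g)\cup(f\cap g)\cup(g-f)$ and each of these three pieces is stable under restriction to a union of the blocks $X_i$, we get $(f\curlyvee g)|_{X_i}=(f|_{X_i})\curlyvee(g|_{X_i})$ for all $f,g\in\Par(X,Y)$, and hence $(\bigcup_i f_i)\curlyvee(\bigcup_i g_i)=\bigcup_i(f_i\curlyvee g_i)$. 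Thus $\Phi$ is a $\curlyvee$-homomorphism whose image $\{\bigcup_i f_i : f_i\in S_i \text{ for all } i\}$ is a $\curlyvee$-subalgebra of $\Par(X,Y)$, so $\prod_i S_i$ is functional.

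I expect no genuine obstacle: the argument is essentially bookkeeping. The one place warranting a moment's attention is the locality claim used in the product construction — that $\curlyvee$-products commute with restriction to a union of domain blocks — but as indicated this drops straight out of the defining formula for $\curlyvee$.
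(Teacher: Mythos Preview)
Your proposal is correct and follows essentially the same approach as the paper: for the flat case you embed $S$ into partial functions on a one-point domain via $a\mapsto\{(\ast,a)\}$ and $0\mapsto\emptyset$, and for the product you take disjoint copies of the $X_i$ and send a tuple to the union of its components. The paper's proof is terser (it omits your case analysis and the explicit locality remark) but the constructions are the same.
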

\begin{proof}
Let $S$ be a flat $\curlyvee$-algebra with bottom element $0$, and let $X=\{0\}$ and $Y=S$.  For each $a\in S$, define $\psi_a\in \Par(X,Y)$ by setting $\psi_0=\emptyset$ and $\psi_a=\{(0,a)\}$.  It is straightforward to check that the map $a\mapsto \psi_a$ is a $\curlyvee$-embedding.  

Consider a family $S_i,i\in {\mc I}$ of functional $\curlyvee$-algebras $S_i$.  View $S_i$ as embedded in $\Par(X_i,Y_i)$ where it is assumed that the $X_i$ are all disjoint from one-another, and similarly for the $Y_i$, let $X=\bigcup_i X_i$ and $Y=\bigcup_i Y_i$, and then note that $\curlyvee$ on the direct product corresponds to $\curlyvee$ on the corresponding elements of $\Par(X,Y)$.
\end{proof}

Hence any subalgebra of a direct product is also functional, and in particular, any subdirect product of flat algebras is functional.  We shall show that the converse is true: every functional $\curlyvee$-algebra is a subdirect product of flat algebras.  This will follow immediately from our proof that every $\curlyvee$-algebra is a subdirect product of flat algebras.  

First, we note that the quasiequation in the definition of $\curlyvee$-algebras may be usefully strengthened.

\begin{lem}  \label{bigqi}
If $S$ is a $\curlyvee$-algebra, then the following law holds: $d\lesssim a\sqcup i,d\lesssim b\sqcup i,d\lesssim c\sqcup i,d\lesssim (a\curlyvee b)\sqcup i, d\lesssim (b\curlyvee c)\sqcup i$ imply that $d\lesssim (a\curlyvee c)\sqcup i$.
\end{lem}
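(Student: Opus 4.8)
The plan is to reduce the statement to the $i$-free quasiequation already available, namely part (5) of Proposition \ref{curlyveeprops}, by absorbing $i$ into each of $a,b,c$ through the operation $\sqcup$. The one subtlety is that the distributive law available in a $\curlyvee$-algebra, part (4) of Proposition \ref{curlyveeprops}, distributes $\sqcup$ over $\curlyvee$ only when the $\sqcup$ is applied \emph{on the left}, whereas the hypotheses and the conclusion have $i$ on the right. This is handled by the fact (Lemma \ref{useful}(3)) that $x\sqcup y\sim y\sqcup x$ for all $x,y$, together with transitivity of the quasiorder $\lesssim$ (Lemma \ref{useful}(2)): hence $d\lesssim x\sqcup y$ if and only if $d\lesssim y\sqcup x$.

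First I would rewrite the five hypotheses with $i$ on the left. By the remark just made, $d\lesssim a\sqcup i$, $d\lesssim b\sqcup i$, $d\lesssim c\sqcup i$ yield $d\lesssim i\sqcup a$, $d\lesssim i\sqcup b$, $d\lesssim i\sqcup c$. For the two hypotheses involving $\curlyvee$, I would combine the same remark with part (4) of Proposition \ref{curlyveeprops}: since $(a\curlyvee b)\sqcup i\sim i\sqcup(a\curlyvee b)=(i\sqcup a)\curlyvee(i\sqcup b)$, the hypothesis $d\lesssim (a\curlyvee b)\sqcup i$ becomes $d\lesssim (i\sqcup a)\curlyvee(i\sqcup b)$; likewise $d\lesssim (b\curlyvee c)\sqcup i$ becomes $d\lesssim(i\sqcup b)\curlyvee(i\sqcup c)$.

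Now I would apply part (5) of Proposition \ref{curlyveeprops} with $i\sqcup a$, $i\sqcup b$, $i\sqcup c$ in place of $a$, $b$, $c$. Its five hypotheses are precisely the five statements just derived, so it gives $d\lesssim (i\sqcup a)\curlyvee(i\sqcup c)$. Finally $(i\sqcup a)\curlyvee(i\sqcup c)=i\sqcup(a\curlyvee c)$ by part (4) of Proposition \ref{curlyveeprops} again, and $i\sqcup(a\curlyvee c)\sim(a\curlyvee c)\sqcup i$ by Lemma \ref{useful}(3), so $d\lesssim(a\curlyvee c)\sqcup i$, as required.

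I do not expect a genuine obstacle: the argument is a short chain of substitutions once one observes that $\sqcup$-ing on the left commutes with $\curlyvee$ and that $\lesssim$ is insensitive to the order of the arguments of $\sqcup$. The only point needing care is to keep $i$ on the correct side whenever the left-distributive law (4) is invoked, which is exactly why Lemma \ref{useful}(3) is used both at the beginning (to move $i$ to the left in the hypotheses) and at the end (to move it back to the right in the conclusion).
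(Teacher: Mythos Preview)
Your proof is correct and is essentially identical to the paper's own argument: the paper also swaps $i$ to the left using $x\sqcup y\sim y\sqcup x$, applies the left-distributive law $i\sqcup(a\curlyvee b)=(i\sqcup a)\curlyvee(i\sqcup b)$, invokes the defining quasiequation (Proposition~\ref{curlyveeprops}(5)) with $i\sqcup a,i\sqcup b,i\sqcup c$ in place of $a,b,c$, and then swaps $i$ back to the right. Your explicit identification of Lemma~\ref{useful}(3) as the tool for moving $i$ across is exactly what the paper uses implicitly.
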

\begin{proof}
Suppose $d\lesssim a\sqcup i,b\sqcup i,c\sqcup i,(a\curlyvee b)\sqcup i,(b\curlyvee c)\sqcup i$.  Then $d\lesssim i\sqcup a,i\sqcup b,i\sqcup c,i\sqcup (a\curlyvee b)=(i\sqcup a)\curlyvee(i\sqcup b),i\sqcup (b\curlyvee c)=(i\sqcup b)\curlyvee(i\sqcup c)$, and we may infer from the simplified quasiequation that $d\lesssim (i\sqcup a)\curlyvee(i\sqcup c)=i\sqcup(a\curlyvee c)\lesssim (a\curlyvee c)\sqcup i$.  So the general quasiequation holds.  
\end{proof}

\begin{pro}  \label{curlid} 
A subset $I$ of a $\curlyvee$-algebra is a $\lesssim$-ideal if and only if it is a down-set under $\lesssim$ such that $i\curlyvee j\in I$ whenever $i,j\in I$.
\end{pro}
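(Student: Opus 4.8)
The plan is to show that, once the down-set condition under $\lesssim$ is assumed, closure under the derived operation $\sqcup$ and closure under $\curlyvee$ are equivalent; the two descriptions of $\lesssim$-ideal then coincide. The single ingredient needed is the pair of facts relating $\curlyvee$ and $\sqcup$. On the one hand, $a\sqcup b=a\curlyvee(a\curlyvee b)$ is literally built out of $\curlyvee$ (the first law of Proposition \ref{curlyveeprops}, which holds in every $\curlyvee$-algebra by definition). On the other hand, I claim $a\curlyvee b\lesssim a\sqcup b$: this follows from the third law of Proposition \ref{curlyveeprops}, namely $(a\curlyvee b)\sqcup(a\sqcup b)=a\sqcup b$, which says $a\curlyvee b\leq a\sqcup b$, combined with part (4) of Lemma \ref{useful}, which upgrades $\leq$ to $\lesssim$.

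For the forward direction, suppose $I$ is a $\lesssim$-ideal. By definition it is a down-set under $\lesssim$, so only closure under $\curlyvee$ remains. Given $i,j\in I$, the $\lesssim$-ideal property gives $i\sqcup j\in I$; since $i\curlyvee j\lesssim i\sqcup j$ and $I$ is down-closed under $\lesssim$, we conclude $i\curlyvee j\in I$.

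For the reverse direction, suppose $I$ is a down-set under $\lesssim$ that is closed under $\curlyvee$. Given $i,j\in I$, first $i\curlyvee j\in I$; applying closure under $\curlyvee$ again to $i$ and $i\curlyvee j$ yields $i\curlyvee(i\curlyvee j)\in I$, which is exactly $i\sqcup j$. Hence $I$ is closed under $\sqcup$ and, being a down-set, is a $\lesssim$-ideal.

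There is essentially no obstacle here; the only point worth flagging is that $\sqcup$ is merely a derived operation in a $\curlyvee$-algebra, so one must make sure the comparison $a\curlyvee b\lesssim a\sqcup b$ is available in the abstract setting and not only for genuine partial functions — but this is exactly what the $\curlyvee$-algebra axioms provide, as noted above. Non-emptiness of $I$ is assumed on both sides of the equivalence, so it plays no role.
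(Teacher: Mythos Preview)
Your proof is correct and follows exactly the same approach as the paper's: both directions rest on the pair of facts $i\curlyvee j\leq i\sqcup j$ (hence $i\curlyvee j\lesssim i\sqcup j$) and $i\sqcup j=i\curlyvee(i\curlyvee j)$. The paper merely states these two facts and leaves the reader to fill in the two directions, which you have spelled out in full.
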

\begin{proof} This follows because $i\curlyvee j\leq i\sqcup j$ and so $i\curlyvee j\lesssim i\sqcup j$, and also $i\sqcup j=i\curlyvee(i\curlyvee j)$.
\end{proof}
Now let $I$ be a relatively maximal $\lesssim$-ideal of the $\curlyvee$-semigroup $S$.  Let
\[
E_I=\{(a,b)\in S\times S\mid a\not\in I, b\not\in I, a\curlyvee b\not\in I\},\mbox{ and }
\epsilon_I=E_I\cup (I\times I).
\]

\begin{lem}  \label{biggie}
Let $S$ be a $\curlyvee$-algebra, with $I$ a relatively maximal $\lesssim$-ideal of $S$.  Then $\epsilon_I$ is a congruence on $S$ and $S_I=S/\epsilon_I$ is a flat $\curlyvee$-algebra.  Moreover $S$ is a subdirect product of the $S_I$ as $I$ ranges over all relatively maximal $\lesssim$-ideals of $S$.
\end{lem}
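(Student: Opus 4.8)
The goal is to show that for a relatively maximal $\lesssim$-ideal $I$ of a $\curlyvee$-algebra $S$, the relation $\epsilon_I = E_I \cup (I \times I)$ is a congruence with flat quotient, and that $S$ embeds subdirectly into the product of these quotients. I would organize the proof into three parts: (a) $\epsilon_I$ is an equivalence relation; (b) $\epsilon_I$ is compatible with $\curlyvee$ (and hence with the derived $\sqcup$); (c) the quotient is flat and the family of all such $\epsilon_I$ separates points.

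For part (a), reflexivity and symmetry are immediate (symmetry of $E_I$ uses commutativity of $\curlyvee$). Transitivity is where Lemma \ref{bigqi} — the strengthened quasiequation — does the real work, but one must be slightly careful because $\epsilon_I$ has two ``blocks'': the single class $I$ and the classes making up $E_I$. The key observation is that $(a,b) \in E_I$ precisely says $a,b \notin I$ and $a \curlyvee b \notin I$; one then shows that for $a \notin I$, by Lemma \ref{relmax} there is $i \in I$ with $d \lesssim a \sqcup i$ for a fixed witness $d \notin I$ — indeed $d$ should be taken so that $I$ is relatively maximal with respect to not containing $d$. Chasing three elements $a \mathrel{\epsilon_I} b \mathrel{\epsilon_I} c$ all outside $I$: using relative maximality we get $i_a, i_b, i_c, i_{ab}, i_{bc} \in I$ with $d \lesssim a \sqcup i_a$, etc.; setting $i = i_a \sqcup i_b \sqcup i_c \sqcup i_{ab} \sqcup i_{bc} \in I$ and applying (5) of Lemma \ref{useful}, all five hypotheses of Lemma \ref{bigqi} hold, so $d \lesssim (a \curlyvee c) \sqcup i$, forcing $a \curlyvee c \notin I$ (else $a \curlyvee c \in I$ gives $(a\curlyvee c)\sqcup i \in I$, so $d \in I$ by the down-set property, contradiction). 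The mixed cases (one of $a,b,c$ in $I$) are easy: if $b \in I$ and $a \mathrel{\epsilon_I} b$, the definition of $E_I$ forces $a \in I$, collapsing the situation.

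For part (b), I must check $a \mathrel{\epsilon_I} a'$ implies $a \curlyvee c \mathrel{\epsilon_I} a' \curlyvee c$ for all $c$. Split on whether things land in $I$. If $a, a' \in I$ then one shows $a \curlyvee c$ and $a' \curlyvee c$ are $\epsilon_I$-related by the same witness-chasing argument; if $c \in I$ as well this is clear. The genuinely delicate sub-case is when $a, a', c \notin I$ and $a \curlyvee a' \notin I$: here I want either both $a \curlyvee c, a' \curlyvee c \in I$ (impossible to rule out a priori, but then they're $\epsilon_I$-related trivially) or else $a \curlyvee c, a' \curlyvee c, (a \curlyvee c)\curlyvee(a' \curlyvee c) \notin I$ — and the last follows because $(a\curlyvee c) \curlyvee (a' \curlyvee c) \geq$ something dominating $d$ after the usual $\sqcup i$ manipulation, using that $\curlyvee$ distributes over $\sqcup$ via law (4) of Proposition \ref{curlyveeprops} and the associativity/regularity of $\sqcup$. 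I expect this compatibility verification — keeping track of which elements are in $I$ and invoking exactly the right instance of the quasiequation — to be the main obstacle; it is essentially the $\curlyvee$-analogue of the o-semilattice argument in \cite{cirulis} but with the complication that $\curlyvee$ is not associative, so one routes everything through $\sqcup$.

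For part (c): the quotient $S_I$ has underlying set $\{[a] : a \notin I\} \cup \{I\}$, with $I$ as a zero (since $a \curlyvee i$ for $i \in I$, $a \notin I$: need $a \curlyvee i \notin I$? no — actually the zero behaviour is $a \mathrel{\epsilon_I} a$ and $i \mathrel{\epsilon_I} j$, and $a \curlyvee i$: I claim $(a, a\curlyvee i)$ need not hold, rather the flat structure comes from: for $a \neq a'$ both outside $I$ with $(a,a') \notin \epsilon_I$, we have $a \curlyvee a' \in I$, i.e. $[a] \curlyvee [a'] = [I] = 0$ in the quotient; and $a \curlyvee I$: pick $i\in I$, is $a \curlyvee i \mathrel{\epsilon_I} a$? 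This needs $a \curlyvee i \notin I$ and $(a \curlyvee i)\curlyvee a \notin I$, which follows from $a \leq$-related facts and relative maximality as above). Thus $S_I$ satisfies the flat $\curlyvee$-algebra definition. Finally, subdirectness: the natural map $S \to \prod_I S_I$ is a homomorphism by part (b); for injectivity, given $a \neq b$ in $S$, if $a \lesssim b$ fails then $b \notin \{x : x \lesssim a\}$... more simply, the set $\{x : x \lesssim a\}$ is a $\lesssim$-ideal (by Proposition \ref{curlid}) not containing $b$ (or not containing $a$, in the symmetric failure), extend it by Zorn's lemma to a relatively maximal $\lesssim$-ideal $I$ missing the appropriate element; then $a$ and $b$ fall in different $\epsilon_I$-classes. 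The one case needing care is $a \neq b$ with $a \sim b$ (same domain-class): then since $\leq$ is a partial order, $a \not\leq b$ or $b \not\leq a$, say $a \not\leq b$, meaning $a \sqcup b \neq b$; one then separates using an ideal built to contain the relevant witness — this is exactly where functionality is not needed and the abstract argument suffices.
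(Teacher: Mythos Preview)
Your transitivity argument in (a) is correct and matches the paper exactly: Lemma~\ref{relmax} to produce witnesses $i_1,\dots,i_5$, part~(5) of Lemma~\ref{useful} to unify them, then Lemma~\ref{bigqi}.

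For (b) and the flatness half of (c), the paper takes a cleaner route than your proposed direct congruence verification. Rather than checking $a\,\epsilon_I\,a'\Rightarrow (a\curlyvee c)\,\epsilon_I\,(a'\curlyvee c)$, it first \emph{defines} the flat $\curlyvee$-algebra structure on the set $S_I$ of $\epsilon_I$-classes (with $I$ as bottom), and then shows the natural map $\theta_I:S\to S_I$ is a $\curlyvee$-homomorphism by a short case analysis on which of $a,b,a\curlyvee b$ lie in $I$ (using only the facts that $a,b\in I\Rightarrow a\curlyvee b\in I$ and $a,a\curlyvee b\in I\Rightarrow b\in I$, both from Proposition~\ref{curlid}). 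Since $\ker(\theta_I)=\epsilon_I$, this simultaneously yields that $\epsilon_I$ is a congruence and that the quotient is flat. Your direct approach can be made to work, but as written it does not handle the mixed case where exactly one of $a\curlyvee c,\,a'\curlyvee c$ lies in $I$, and your flatness discussion is visibly tangled over whether $(a,a\curlyvee i)\in E_I$ for $a\notin I$, $i\in I$ --- a fact that is immediate in the homomorphism picture but requires its own argument in yours.

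There is a genuine gap in your separation argument. In the case $a\neq b$ with $a\sim b$, you say only that ``one then separates using an ideal built to contain the relevant witness''. The missing idea is that the witness is $a\curlyvee b$ itself: from $a\sqcup b=a$, $b\sqcup a=b$ and commutativity of $\curlyvee$, one computes that $b\lesssim a\curlyvee b$ would force $a=b$; hence $b\not\lesssim a\curlyvee b$, and one extends the $\lesssim$-ideal $\{s:s\lesssim a\curlyvee b\}$ (which contains $a\curlyvee b$) to a relatively maximal $I$ not containing $b$. Then either $a\in I$ and $b\notin I$, or $a,b\notin I$ but $a\curlyvee b\in I$, so $(a,b)\notin E_I$. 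Without identifying $a\curlyvee b$ as the separating element and carrying out that short computation, this case of the subdirect embedding is not established.
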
 
\begin{proof} 
Suppose $I$ is relatively maximal with respect to not containing $d\in S$.  Clearly, $\epsilon_I$ is reflexive and symmetric.  For transitivity, suppose $(a,b)\in \epsilon_I$ and $(b,c)\in \epsilon_I$.   If $a,b\in I$ and $b,c\in I$ then of course $a,c\in I$.  The only other case to check is whether none of $a,b,c,a\curlyvee b,b\curlyvee c$ being in $I$ implies also $a\curlyvee c\not\in I$.   

Suppose for a contradiction that $a\not\in I, b\not\in I, c\not\in I, a\curlyvee b\not\in I, b\curlyvee c\not\in I$, yet $a\curlyvee c\in I$.  Then by Lemma \ref{relmax}, there are $i_1,i_2,\ldots,i_5$ such that $d\lesssim a\sqcup i_1$, $d\lesssim b\sqcup i_2$, $d\lesssim c\sqcup i_3$, $d\lesssim (a\curlyvee b)\sqcup i_4$ and $d\lesssim (b\curlyvee c)\sqcup i_5$.  So by (5) in Lemma \ref{useful}, $d\lesssim a\sqcup i$, $d\lesssim b\sqcup i$, $d\lesssim c\sqcup i$, $d\lesssim (a\curlyvee b)\sqcup i$ and $d\lesssim (b\curlyvee c)\sqcup i$, where $i=i_1\sqcup\ldots\sqcup i_5\in I$.  So by Lemma \ref{bigqi}, we have that $d\lesssim (a\curlyvee c)\sqcup i\in I$, so $d\in I$, a contradiction.  So $a\curlyvee c\not\in I$, as required.  Hence $\epsilon_I$ is an equivalence relation.

Let $\overline{a}$ denote the $\epsilon_I$-class of $S_I$ containing $a\in S$.  On $S_I$, define the flat $\curlyvee$-algebra structure obtained by making $I$ the bottom element with all other $\epsilon_I$-classes maximal.  Define $\theta_I:S\rightarrow S_I$ by setting 
\[
a\theta_I=\begin{cases} \overline{a}&\mbox{ if }a\not\in I\\ I&\mbox{ if }a\in I.
\end{cases}
\]
We shall show that $\theta_I$ is a (clearly surjective) homomorphism; since its kernel is $\epsilon_I$, it will follow that $S_I\cong S/\epsilon_I$.  We perform a case analysis.  Now by Proposition \ref{curlid}, for $a,b\in S$, $a,b\in I$ imply $a\curlyvee b\in I$, and if $a,a\curlyvee b\in I$ then $b\in I$ (since $b\leq b\sqcup a\lesssim a\sqcup b=a\curlyvee (a\curlyvee b)\in I$). 

Suppose $a\curlyvee b\in I$ but $a\not\in I,b\not\in I$.  Then 
\begin{align*}
a\theta_I\curlyvee b\theta_I&=\ol{a}\curlyvee\ol{b}\\
&=I\mbox{ since $a\curlyvee b\in I$ so $\ol{a}\neq \ol{b}$}\\
&=(a\curlyvee b)\theta_I\mbox{ since $a\curlyvee b\in I$.}
\end{align*}

Suppose $a\curlyvee b\in I$ and $a\in I,b\in I$.  Then $a\theta_I\curlyvee b\theta_I=I\curlyvee I=I=(a\curlyvee b)\theta_I$.

Suppose $a\curlyvee b\not\in I$, $a\not\in I,b\not\in I$.  Then $\ol{a}=\ol{b}$, and $a\curlyvee(a\curlyvee b)=a\sqcup b\geq a\curlyvee b\not\in I$, so $a\curlyvee(a\curlyvee b)\not\in I$, and so $\ol{a}=\ol{a\curlyvee b}$.  Hence $a\theta_I\curlyvee b\theta_I=\ol{a}\curlyvee \ol{b}=\ol{a}=\ol{a\curlyvee b}=(a\curlyvee b)\theta_I$.

Suppose $a\curlyvee b\not\in I, a\not\in I, b\in I$.  Then as in the previous case, $\ol{a}=\ol{a\curlyvee b}$, and so $a\theta_I\curlyvee b\theta_I=\ol{a}\curlyvee I=\ol{a}=\ol{a\curlyvee b}=(a\curlyvee b)\theta_I$.  The case $a\curlyvee b\not\in I, a\in I, b\not\in I$ is very similar by symmetry.

This covers all cases, so $\theta_I$ is a surjective homomorphism, and because $ker(\theta_I)=\epsilon_I$, the latter must be a congruence on $S$.
It now suffices to show that $S$ is a subdirect product of the $S_I$, for which it suffices to show that $\epsilon=\bigcap_I \epsilon_I$ is the diagonal relation.  

Let $a,b\in S$ be distinct, and without loss of generality assume $a\leq b$, so $a\lesssim b$.  We shall find a relatively maximal $\lesssim$-ideal $I$ such that $a\theta_I\neq b\theta_I$, so that $(a,b)\not\in \epsilon_I$ and so $(a,b)\not\in \epsilon$.  

We consider two cases.  First suppose that $b\not\lesssim a$.  Then let $I_0=\{s\in S\mid s\lesssim a\}$, a $\lesssim$-ideal since it is obviously down-closed under $\lesssim$, and if $i,j\in I_0$ then $a\sqcup i=a=a\sqcup j$ so $a\sqcup (i\sqcup j)=(a\sqcup i)\sqcup j=a\sqcup j=a$, and so $i\sqcup j\lesssim a$ also.  Moreover $a\in I_0$ but $b\not\in I_0$.  Extend $I_0$ to a $\lesssim$-ideal $I$ of $S$ that is maximal with respect to not containing $b$ using Zorn's Lemma; then of course $a\in I$ still.  Then $a\theta_I=I$ whereas $b\theta_I=\ol{b}\neq I$, so $(a,b)\not\in \epsilon_I$.

Secondly, suppose $b\lesssim a$, and recall that $a\lesssim b$.  Suppose for a contradiction that $b\lesssim a\curlyvee b$ also.  So $b\sqcup a=b$, $a\sqcup b=a$ and $(a\curlyvee b)\sqcup b=a\curlyvee b$.  Hence
\ben
\item $b\curlyvee(b\curlyvee a)=b$,
\item $a\curlyvee(a\curlyvee b)=a$,
\item $(a\curlyvee b)\curlyvee ((a\curlyvee b)\curlyvee b)=a\curlyvee b$.
\een
Recalling that $s\curlyvee t=t\curlyvee s$ for all $s,t$, using (1) in (3) gives $(a\curlyvee b)\curlyvee b=a\curlyvee b$, and (1) then gives $b=a\curlyvee b$, and then (2) gives $a\curlyvee b=a$, and so $a=b$, a contradiction.  So $b\not\lesssim a\curlyvee b$.
Now let $I_0=\{s\in S\mid s\lesssim a\curlyvee b\}$, again a $\lesssim$-ideal of $S$, and $b\not\in I_0$ as just shown.  Again extend to a $\lesssim$-ideal of $S$ that is maximal with respect to not containing $b$ using Zorn's Lemma, noting that $a\curlyvee b\in I$.  If $a\in I$ then $a\theta_I=I$, whereas $b\theta_I=\ol{b}\neq I$ (as before).  If $a\not\in I$, so that $b\not\in I$ and so $a\theta_I=\ol{a}$ and $b\theta_I=\overline{b}$ are both defined, because $a\curlyvee b\in I$, we have $a\theta_I\neq b\theta_I$.  So $(a,b)\not\in \epsilon_I$ in either case.
\end{proof}

From the above lemma, as well as Proposition \ref{isfunc} and the discussion after it, we obtain the following.

\begin{thm} \label{t2}
The class of functional algebras of signature $(\curlyvee)$ equals the class of $\curlyvee$-algebras, which consists of subdirect products of flat $\curlyvee$-algebras.
\end{thm}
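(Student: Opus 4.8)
The plan is to assemble the theorem from the pieces already in place, noting that the only genuinely new content is the identification of the two classes; the structural decomposition is supplied by Lemma \ref{biggie}. I would argue as follows. By Corollary \ref{faca}, every functional algebra of signature $(\curlyvee)$ is a $\curlyvee$-algebra, so one inclusion is free. For the converse, let $S$ be an arbitrary $\curlyvee$-algebra. Lemma \ref{biggie} tells us $S$ embeds as a subdirect product into the product of the quotients $S_I = S/\epsilon_I$, where $I$ ranges over the relatively maximal $\lesssim$-ideals of $S$, and each $S_I$ is a flat $\curlyvee$-algebra. This already gives the second assertion (``consists of subdirect products of flat $\curlyvee$-algebras''), modulo checking that the decomposition is nontrivial when $S$ itself is not already flat — but since a subdirect product allows a single factor, even a flat $S$ is trivially a (one-term) subdirect product of flat algebras, so no case distinction is needed.

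Next I would close the loop on functionality. By Proposition \ref{isfunc}, each flat $\curlyvee$-algebra $S_I$ is functional, say $S_I \hookrightarrow \Par(X_I, Y_I)$, and the same proposition (its second half) shows that a direct product of functional algebras of signature $(\curlyvee)$ is again functional: one takes the $X_I$ pairwise disjoint and likewise the $Y_I$, forms $X = \bigcup_I X_I$ and $Y = \bigcup_I Y_I$, and observes that $\curlyvee$ computed coordinatewise on the product agrees with $\curlyvee$ computed in $\Par(X,Y)$ on the corresponding "glued" partial functions. Since $S$ embeds in this product, and a subalgebra of a functional algebra is functional (it is literally a set of partial functions closed under $\curlyvee$), $S$ is functional. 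Combining the two inclusions gives the stated equality of classes.

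The one point that needs a word of care — and the only place where anything could go wrong — is the passage from "subdirect product of flat $\curlyvee$-algebras" to "functional", because Proposition \ref{isfunc} as stated handles \emph{direct} products, not subdirect ones. But this is immediate: a subdirect product of the $S_I$ is in particular a subalgebra of the direct product $\prod_I S_I$, which is functional by Proposition \ref{isfunc}, and any subalgebra of a functional $(\curlyvee)$-algebra is functional since it inherits the same concrete interpretation of $\curlyvee$ on partial functions. So the proof is essentially a three-line synthesis: Corollary \ref{faca} for one direction; Lemma \ref{biggie} plus Proposition \ref{isfunc} plus closure of functionality under subalgebras for the other; and the subdirect decomposition is read straight off Lemma \ref{biggie}. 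I expect no real obstacle here — the substance was already discharged in proving Lemma \ref{biggie}, and this theorem is the harvest.
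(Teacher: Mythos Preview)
Your proposal is correct and follows exactly the paper's approach: the paper's proof is a one-line reference to Lemma \ref{biggie}, Proposition \ref{isfunc}, and the remark immediately after it that subalgebras (hence subdirect products) of functional algebras are functional, with Corollary \ref{faca} supplying the other inclusion. You have simply unpacked this synthesis in more detail, including the same subdirect-versus-direct clarification.
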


This shows that the class of functional algebras of signature $(\curlyvee)$ is a finitely based quasivariety (and not a variety by Theorem \ref{proper}).  It also follows that knowledge of $\sqcup$ is enough to determine $\curlyvee$ in a $\curlyvee$-algebra, since from it one can recover $\leq$, hence meets where they exist, and then $a\curlyvee b=(a\sqcup b)\wedge(b\sqcup a)$ (a meet which will always exist in a $\curlyvee$-algebra since it is functional).

\subsection{Completeness for ado-semilattices}

The notion of flatness for ado-semilattices is precisely that for o-semilattices as discussed in \cite{cirulis}; from the poset perspective they are just flat posets.  As noted in \cite{cirulis}, all flat o-semilattices are functional, and hence are ado-semilattices, and indeed they are exactly o-semilattices that satisfy $0\cap a=a$ and $a\cap b=0$ for $a\neq b$; it follows that $a\sqcup b=a$ if $a\neq 0$, with $0\sqcup a=a$ for all $a$.

Since every ado-semilattice is a $\curlyvee$-algebra by Corollary \ref{faca}, we obtain the following consequence of Proposition \ref{flatsimp}.

\begin{cor}
Every flat ado-semilattice is simple, hence subdirectly irreducible.
\end{cor}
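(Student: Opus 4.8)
The plan is to reduce this to Proposition~\ref{flatsimp}, using the fact that $\curlyvee$ is a term operation of every ado-semilattice (via $a\curlyvee b=(a\sqcup b)\cap(b\sqcup a)$, as in Corollary~\ref{faca}), so that the congruence lattice of an ado-semilattice is contained in that of its $\curlyvee$-reduct.

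First I would check that the $\curlyvee$-reduct of a flat ado-semilattice $(S,\sqcup,\cap)$ is itself a \emph{flat} $\curlyvee$-algebra. Using the explicit description of flat o-semilattices recalled just above (namely $0\cap a=a\cap 0=0$, $a\cap a=a$, $a\cap b=0$ for distinct $a,b$, $0\sqcup a=a$, and $a\sqcup b=a$ for $a\ne 0$), a brief case analysis on $a\curlyvee b=(a\sqcup b)\cap(b\sqcup a)$ gives $a\curlyvee b=0$ whenever $a,b$ are distinct and nonzero, $a\curlyvee 0=0\curlyvee a=a$, and $a\curlyvee a=a$; this is exactly the defining behaviour of a flat $\curlyvee$-algebra with bottom $0$, so $(S,\curlyvee)$ is one by Corollary~\ref{faca}.

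Then by Proposition~\ref{flatsimp} the $\curlyvee$-algebra $(S,\curlyvee)$ is simple. Since every congruence of $(S,\sqcup,\cap)$ automatically respects the term operation $\curlyvee$, we get $\mathrm{Con}(S,\sqcup,\cap)\subseteq \mathrm{Con}(S,\curlyvee)=\{\Delta_S,\nabla_S\}$; as $\Delta_S$ and $\nabla_S$ are themselves congruences of $(S,\sqcup,\cap)$, equality holds, so $(S,\sqcup,\cap)$ is simple, hence subdirectly irreducible. There is essentially no obstacle here; the one point requiring care is the direction of the inclusion — it is the \emph{richer} structure whose simplicity we want, and it inherits it from the poorer $\curlyvee$-reduct precisely because adjoining the term-definable operation $\curlyvee$ can only shrink, never enlarge, the congruence lattice. (Alternatively one could bypass Proposition~\ref{flatsimp} and argue directly by mimicking its proof: from a non-injective homomorphism out of $S$ produce distinct $a,b$ with equal images, deduce that $0$ has that same image using $a\curlyvee b$, and then collapse everything via $c\theta=(c\curlyvee 0)\theta=\cdots$; but invoking the already-proved proposition is cleaner.)
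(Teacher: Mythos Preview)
Your proposal is correct and follows essentially the same approach as the paper. The paper's justification is in fact just the single sentence preceding the corollary (``Since every ado-semilattice is a $\curlyvee$-algebra by Corollary~\ref{faca}, we obtain the following consequence of Proposition~\ref{flatsimp}''), so you have supplied the details the paper left implicit: the verification that the $\curlyvee$-reduct of a flat ado-semilattice is flat, and the observation that term-definability of $\curlyvee$ forces $\mathrm{Con}(S,\sqcup,\cap)\subseteq\mathrm{Con}(S,\curlyvee)$.
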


\begin{lem}  \label{biggie2}
Let $S$ be an ado-semilattice, with $I$ a relatively maximal $\lesssim$-ideal of $S$.  Then the $\curlyvee$-algebra congruence $\epsilon_I$ is an ado-semilattice congruence on $S$ and $S/\epsilon_I$ is a flat ado-semilattice.  Moreover $S$ is a subdirect product of the $S/\epsilon_I$ as $I$ ranges over all relatively maximal $\lesssim$-ideals of $S$.
\end{lem}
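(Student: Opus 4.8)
The plan is to build directly on Lemma~\ref{biggie}: it already supplies that $\epsilon_I$ is a $\curlyvee$-algebra congruence, that $S/\epsilon_I$ is a flat $\curlyvee$-algebra, and that $\bigcap_I\epsilon_I$ is the diagonal. Since $\sqcup$ is the derived operation $a\sqcup b=a\curlyvee(a\curlyvee b)$, any relation respecting $\curlyvee$ automatically respects $\sqcup$; hence the only genuinely new facts to establish are that $\epsilon_I$ respects $\cap$ and that the meet induced on $S/\epsilon_I$ coincides with the flat meet determined by its (already flat) poset structure. Granting these, the subdirect representation as ado-semilattices is immediate, since the same family of congruences has trivial intersection.

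Everything reduces to locating $a\cap b$ among the $\epsilon_I$-classes, which I would record as three claims. \emph{(A)} If $a\in I$ or $b\in I$ then $a\cap b\in I$: from $a\cap b\le a$ and $a\cap b\le b$ we get $a\cap b\lesssim a$ and $a\cap b\lesssim b$ by Lemma~\ref{useful}(4), and $I$ is a down-set under $\lesssim$. \emph{(B)} If $a\curlyvee b\in I$ then $a\cap b\in I$: since $a\cap b\le a\le a\sqcup b$ and $a\cap b\le b\le b\sqcup a$ we get $a\cap b\le(a\sqcup b)\cap(b\sqcup a)=a\curlyvee b$, so $a\cap b\lesssim a\curlyvee b\in I$. \emph{(C)} If $a,b\notin I$ and $a\curlyvee b\notin I$, then $a\cap b\notin I$ and $(a\cap b)\,\epsilon_I\,a$. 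For (C), extended distributivity (Lemma~\ref{eqado}) gives $a\sqcup(a\cap b)=(a\sqcup a)\cap(a\sqcup b)=a$, while $(a\cap b)\sqcup a=a$ because $a\cap b\le a$; hence $a\curlyvee(a\cap b)=a$, so as soon as $a\cap b\notin I$ the triple $(a,a\cap b)$ lies in $E_I$ and $(a\cap b)\,\epsilon_I\,a$. To see $a\cap b\notin I$, argue by contradiction: assuming $a\cap b\in I$, use relative maximality and Lemma~\ref{relmax} to choose $i_1,i_2,i_3\in I$ with $d\lesssim a\sqcup i_1$, $d\lesssim b\sqcup i_2$, $d\lesssim(a\curlyvee b)\sqcup i_3$; set $i=i_1\sqcup i_2\sqcup i_3\in I$, so that by Lemma~\ref{useful}(5),(3) one has $d\lesssim i\sqcup a$, $d\lesssim i\sqcup b$, and $d\lesssim i\sqcup(a\curlyvee b)=(i\sqcup a)\curlyvee(i\sqcup b)$ by Proposition~\ref{curlyveeprops}(4). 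Applying Lemma~\ref{qe}(2) to $i\sqcup a$ and $i\sqcup b$ gives $d\lesssim(i\sqcup a)\cap(i\sqcup b)=i\sqcup(a\cap b)$, again by extended distributivity; but $i,a\cap b\in I$ forces $i\sqcup(a\cap b)\in I$, so $d\in I$, contradicting the choice of $I$.

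From (A)--(C) the remaining verifications are bookkeeping about which class each element occupies, using that for $a,b\notin I$ one has $a\,\epsilon_I\,b$ precisely when $a\curlyvee b\notin I$, and that $a\,\epsilon_I\,a'$ with $a\notin I$ forces $a'\notin I$. For $\cap$-compatibility: given $a\,\epsilon_I\,a'$ and $b\,\epsilon_I\,b'$, if $a$ or $b$ lies in $I$ the corresponding primed element does too and both meets land in $I$ by (A); if $a,b\notin I$ but $a\curlyvee b\in I$, then $a',b'\notin I$ and $a'\curlyvee b'\in I$ as well (else $a,a',b,b'$ would all share one class), so both meets land in $I$ by (B); and if $a,b\notin I$ with $a\curlyvee b\notin I$, then $a',b'\notin I$, both $a\curlyvee b$ and $a'\curlyvee b'$ lie outside $I$, and $a,b,a',b'$ all lie in one class, so (C) places $a\cap b$ and $a'\cap b'$ in that same class. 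Hence $\epsilon_I$ is an ado-semilattice congruence. The same trichotomy shows the induced meet on $S/\epsilon_I$ is the flat one: a meet with an argument in the bottom class, or with two distinct non-bottom arguments, lands in the bottom class by (A)/(B), while for $\overline a=\overline b$ non-bottom, (C) gives $\overline{a\cap b}=\overline a$. Thus $S/\epsilon_I$ is a flat o-semilattice, hence a flat ado-semilattice. Finally, since $\bigcap_I\epsilon_I$ is the diagonal by Lemma~\ref{biggie}, $S$ is a subdirect product of the $S/\epsilon_I$ as ado-semilattices.

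The main obstacle is Claim (C), specifically the implication ``$a,b\notin I$ and $a\curlyvee b\notin I$ imply $a\cap b\notin I$'': this is the only point where the intersection-related strength of the $\curlyvee$-axioms (packaged in Lemma~\ref{qe}(2), itself resting on Lemma~\ref{critical}) must be married with the relative maximality of $I$. Everything else is routine once (C) is in hand; in effect the statement is the ado-semilattice shadow of Lemma~\ref{biggie} together with this single extra ingredient about $\cap$.
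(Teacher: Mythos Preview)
Your proof is correct and follows the same plan as the paper: reduce to checking that $\theta_I$ (equivalently $\epsilon_I$) respects $\cap$, then split on whether $a\cap b\in I$. You are in fact more careful than the paper at the one delicate point: in the subcase $a,b\notin I$ with $a\cap b\in I$, the paper argues only that $a\neq b$ and then asserts $\ol a\cap\ol b=I$, which really requires $\ol a\neq\ol b$, i.e.\ $a\curlyvee b\in I$; your Claim~(C) --- that $a,b,a\curlyvee b\notin I$ forces $a\cap b\notin I$, proved via Lemma~\ref{qe}(2) and extended distributivity --- is precisely the step the paper leaves implicit.
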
 
\begin{proof}
Since each $S_I$ as in the proof of Lemma \ref{biggie} is made into a flat poset with $I$ as the bottom element, it has flat ado-semilattice structure as well.  To show what is claimed, it suffices to show that each $\curlyvee$-algebra homomorphism $\theta_I$ respects $\cap$ as well.

Let $a,b\in S$.  First, suppose $a\cap b\not\in I$.  Then $a\curlyvee b\not\in I$ since $a\cap b\leq a\curlyvee b$, and also $a,b\not\in I$ since $a\cap b\leq a,b$, so $\overline{a}=\ol{b}$.  Moreover $(a\cap b)\curlyvee a=((a\cap b)\sqcup a)\cap (a\sqcup (a\cap b))=a\cap a=a\not\in I$ since $a\cap b\leq a$.   Alternatively, suppose $a\cap b\in I$; then $(a\cap b)\theta_I=I$.  Now if $a\in I$ or $b\in I$, $a\theta_I\cap b\theta_I=I$.  If $a\not\in I$ and $b\not\in I$, then $a\neq b$ as otherwise $a\cap b\not\in I$, a contradiction, so $a\theta_I\cap b\theta_I=\ol{a}\cap \ol{b}=I$.  So in either case, we have $(a\cap b)\theta_I=a\theta_I\cap b\theta_I$, as required.  
\end{proof}

Again, we immediately obtain the following.

\begin{thm} \label{t3}
The class of functional algebras of signature $(\sqcup,\cap)$ is finitely axiomatised as the variety of ado-semilattices.
\end{thm}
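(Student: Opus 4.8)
The plan is to combine soundness (already in hand) with the subdirect decomposition into flat pieces furnished by Lemma \ref{biggie2}. Soundness is the easy direction: as observed earlier, every functional algebra of signature $(\sqcup,\cap)$ is an o-semilattice in the sense of \cite{cirulis}, is distributive, and has associative $\sqcup$, hence is an ado-semilattice. Moreover the ado-semilattice axioms form a finite set of equations --- the four o-semilattice laws, which become equations once $\leq$ is eliminated via $x\leq y\Leftrightarrow x=x\cap y$, together with the distributivity identity and associativity of $\sqcup$ --- so ado-semilattices constitute a finitely based variety. It therefore remains only to prove the converse: every ado-semilattice is functional.

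For this, let $S$ be an ado-semilattice. By Lemma \ref{biggie2}, $S$ is (isomorphic to) a subdirect product of the flat ado-semilattices $S/\epsilon_I$, as $I$ ranges over the relatively maximal $\lesssim$-ideals of $S$. Hence it suffices to establish two things: (a) every flat ado-semilattice is functional as a $(\sqcup,\cap)$-algebra, and (b) a direct product of functional $(\sqcup,\cap)$-algebras is functional, so that any subalgebra of such a product --- in particular, any subdirect product --- is functional. Both parts go exactly as in Proposition \ref{isfunc}, now checking $\cap$ alongside the override-type operation. For (a): given a flat ado-semilattice $S$ with bottom element $0$, put $X=\{0\}$, $Y=S$, and set $\psi_0=\emptyset$ and $\psi_a=\{(0,a)\}$ for $a\neq 0$; using the flat identities $0\sqcup a=a$, $a\sqcup b=a$ for $a\neq 0$, together with $0\cap a=a$, $a\cap a=a$ and $a\cap b=0$ for $a\neq b$, one checks directly that $a\mapsto\psi_a$ is an injective homomorphism for both $\sqcup$ and $\cap$ into $\Par(X,Y)$. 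For (b): embed each factor $S_i$ in $\Par(X_i,Y_i)$ with the $X_i$ pairwise disjoint and the $Y_i$ pairwise disjoint, let $X=\bigcup_i X_i$ and $Y=\bigcup_i Y_i$, and send a tuple to the corresponding ``disjoint-union'' partial function in $\Par(X,Y)$; since the domain of each such function is partitioned by the $X_i$, both $\sqcup$ and $\cap$ of two such functions are computed coordinatewise and so agree with the direct-product operations. Combining (a), (b) and Lemma \ref{biggie2} shows $S$ is functional, which completes the proof.

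I expect no genuine obstacle here, since the substantive work was already carried out in Lemmas \ref{biggie} and \ref{biggie2}: this theorem is essentially a transcription of the $\curlyvee$-algebra completeness argument to the richer signature $(\sqcup,\cap)$, using the ado-semilattice form of the flat decomposition. The only point that needs a moment's care is the verification in (b) that the disjoint-union embedding respects $\cap$, and this is immediate because the domains involved are disjoint unions of the $X_i$, so no coordinates interact.
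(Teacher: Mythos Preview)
Your proposal is correct and follows essentially the same approach as the paper: the paper simply says ``Again, we immediately obtain the following'' after Lemma~\ref{biggie2}, meaning exactly the combination of soundness, the subdirect decomposition into flat ado-semilattices, and the functional representability of flat pieces and their direct products that you spell out. (One tiny slip: in the flat identities you want $0\cap a=0$, not $0\cap a=a$; the embedding check then goes through as you describe.)
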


\section{Difference}   \label{sec:diff}

Recall that the operation of difference can be used to express intersection.  The functional signature $(\sqcup,\backslash)$ is axiomatised in \cite{cirulis} as the variety of associative subtraction o-semilattices: these are associative o-semilattices which satisfy several further laws involving difference only which ensure they are Boolean nearlattices (see Proposition 6.1 there).  An alternative axiomatisation building on ado-semilattices follows from the next definition and result.

\begin{dfn} \label{odef}
An algebra $(S,\sqcup,\backslash)$ such that $(S,\sqcup,\cap)$ is an ado-semilattice and the following three laws are satisfied is said to be an {\em od-algebra} (override-difference algebra): 
\ben
\item $(a\backslash b)\cap b=0$ (meaning that $(a\backslash b)\cap b$ is constant);
\item $(a\backslash b)\sqcup (a\cap b)=a$.
\een
(Here we define $a\cap b=a\backslash (a\backslash b)$.)
\end{dfn}

It is fairly clear that the od-algebra laws are all functionally sound and that we may interpret $0$ as the empty function.  

\begin{pro}  \label{adominus}
The functional algebras of signature $(S,\sqcup,\backslash)$ are axiomatised by the od-algebra laws.
\end{pro}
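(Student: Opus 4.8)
The plan is this. One direction is routine: in $\Par(X,Y)$ one has $f\backslash(f\backslash g)=f\cap g$, $(f\backslash g)\cap g=\emptyset$, and $(f\backslash g)\sqcup(f\cap g)=f$, with the empty function playing the role of ``$0$'' in law (1), so (together with the already-noted functional soundness of the ado-semilattice laws) the od-algebra laws are functionally sound. For the converse, let $S$ be an od-algebra, so $(S,\sqcup,\cap)$ is an ado-semilattice with $a\cap b=a\backslash(a\backslash b)$. First I would record a few consequences of the od-algebra laws. Law (2) with $b=a$ gives $(a\backslash a)\sqcup a=a$, so $a\backslash a\le a$, and then law (1) gives $a\backslash a=(a\backslash a)\cap a=0$; thus $0=a\backslash a$ for every $a$, so $0$ is the least element of $(S,\le)$, hence $0\lesssim a$ and $a\sqcup 0=a$ for all $a$, while $0\sqcup a=(a\backslash a)\sqcup(a\cap a)=a$ by law (2); so $0$ is a two-sided $\sqcup$-identity. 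Also $a\backslash b\le a$ directly from law (2). Finally, if $x\le a$ then $x\sqcup a=a$ and (since $x\lesssim a$) $a\sqcup x=a$, so $x\curlyvee a=(x\sqcup a)\cap(a\sqcup x)=a$.

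By Lemma \ref{biggie2}, the ado-semilattice reduct of $S$ is a subdirect product, via the quotient maps $\theta_I\colon S\to S/\epsilon_I$, of the flat ado-semilattices $S/\epsilon_I$ ($I$ ranging over the relatively maximal $\lesssim$-ideals), each $\theta_I$ respecting $\sqcup$ and $\cap$. Equip each flat ado-semilattice $S/\epsilon_I$ with its \emph{flat difference}, $x\backslash y=0$ if $x=0$ or $x=y$, and $x\backslash y=x$ otherwise, which one checks makes $S/\epsilon_I$ an od-algebra. The main step is to show each $\theta_I$ also respects $\backslash$; I would do this by a case analysis, writing $\overline a$ for the $\epsilon_I$-class of $a$ and identifying the bottom $0$ of $S/\epsilon_I$ with the class $I$. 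If $a\in I$, then $a\backslash b\le a$ forces $a\backslash b\in I$, so $\overline{a\backslash b}=\overline 0=\overline a\backslash\overline b$. If $a\notin I$ but $a\cap b\in I$, then $a\backslash b\notin I$ --- otherwise $a=(a\backslash b)\sqcup(a\cap b)\in I$, using closure of the $\lesssim$-ideal $I$ under $\sqcup$ --- and $(a\backslash b)\curlyvee a=a\notin I$ since $a\backslash b\le a$, so $(a\backslash b,a)\in\epsilon_I$, i.e.\ $\overline{a\backslash b}=\overline a$; moreover $\overline a\cap\overline b=\overline 0\ne\overline a$ gives $\overline a\ne\overline b$, so the flat difference $\overline a\backslash\overline b$ is indeed $\overline a$. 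If $a\notin I$ and $a\cap b\notin I$, then $\overline a,\overline b\ne\overline 0$ and $\overline a\cap\overline b\ne\overline 0$, which by flatness forces $\overline a=\overline b$; hence, using $a\backslash b\le a$, then $\overline a=\overline b$, then law (1),
\[
\overline{a\backslash b}=\overline{(a\backslash b)\cap a}=\overline{(a\backslash b)\cap b}=\overline 0=\overline a\backslash\overline b.
\]

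Finally I would assemble the pieces. Every flat od-algebra is functional of signature $(\sqcup,\backslash)$: the embedding $x\mapsto\{(\ast,x)\}$ with $0\mapsto\emptyset$ used in Proposition \ref{isfunc} respects $\backslash$ as well, since under it the flat difference is exactly set-theoretic difference. A direct product of functional $(\sqcup,\backslash)$-algebras is functional, by the disjoint-union construction of Proposition \ref{isfunc} (set difference, like $\sqcup$, acts componentwise on the disjoint pieces), and a subalgebra of a functional algebra is functional. Since $S$ now embeds, via $a\mapsto(\theta_I(a))_I$, as a subdirect product of the flat od-algebras $S/\epsilon_I$, it follows that $S$ is functional. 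I expect the only genuine obstacle to be the two non-trivial cases of the case analysis: the key point is that the congruences $\epsilon_I$, built purely from the $\sqcup/\cap$ (equivalently $\curlyvee$) structure, automatically respect $\backslash$, and this hinges on law (2) (in the second case, to keep $a\backslash b$ out of $I$) and on law (1) (in the third, to force it into $I$); once one sees that no new ideals or congruences are needed, everything else is bookkeeping.
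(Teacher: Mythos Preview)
Your proof is correct and follows the same overall strategy as the paper: verify soundness, use the already-established functional representation of the ado-semilattice reduct (via the $\theta_I$), note that $0$ is the bottom element so is mapped to the empty function, and then argue that $\backslash$ is correctly represented.

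The execution of the last step differs. You work quotient-by-quotient: you put the obvious ``flat difference'' on each $S/\epsilon_I$ and verify via a three-case analysis that each $\theta_I$ respects $\backslash$, then reassemble. The paper instead works directly in the assembled functional representation of $(S,\sqcup,\cap,0)$ and argues once and for all that laws (1) and (2) pin down $a\backslash b$ uniquely there: any element that is disjoint from $b$, contained in $a$, and whose override with $a\cap b$ equals $a$ must be the set-theoretic difference. This ``abstract definability'' argument is shorter and is reusable for any operation determined (up to uniqueness) by equations in already-represented operations; your case analysis, by contrast, makes explicit exactly how each law is used (law (2) to keep $a\backslash b$ out of $I$ when $a\cap b\in I$, law (1) to force it into $I$ when $a\cap b\notin I$), which is illuminating in its own right. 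Either route is fine.
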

\begin{proof}
Soundness was noted earlier.
Conversely, suppose $(S,\sqcup,\backslash)$ satisfies the above conditions.  Then we know that its $(S,\sqcup,\cap)$-reduct is functional from Theorem \ref{t3}.  Moreover, using the three laws involving difference, we have that for $a\in S$, $a\backslash a\leq a$, so $a\backslash a=(a\backslash a)\cap a=0$, and so $0\sqcup a=(a\backslash a)\sqcup a=a$, so $0\leq a$ for all $a\in S$, and so $0\in I$ for every $\lesssim$-ideal $I$ of $S$.  So each $\theta_I$ maps $0$ to $I$ in $S_I$.  View $S_I$ as partial functions; then the bottom element $I$ can be taken to be the empty function.  Hence in the resulting functional representation of $S$ determined by $\theta$ (or indeed any in which all other operations and $0$ are correctly represented), the three laws for $\backslash$ above force $a\backslash b$ to be (1) disjoint with $b$, (2) contained in $a$, and (3) such that when its (necessarily disjoint) union with $a\cap b$ is taken, the result is all of $a$.  This forces $a\backslash b$ to be the set-theoretic difference of $a,b$.  (In the language of Section 3.1 of \cite{overup}, $\backslash$ is {\em abstractly definable} in terms of $\sqcup,\cap,0$.) 
\end{proof}

So the variety of od-algebras axiomatises the functional algebras of signature $(\sqcup,\backslash)$, and is therefore equal to the variety of associative subtractive o-semilattices as in \cite{cirulis}.

\section*{Acknowledgements}

I would like to acknowledge the considerable time-saving assistance of the {\em Prover9/Mace4} software \cite{Pr9}, which helped with finding both proofs and examples.

\noindent Tim Stokes\\
Department of Mathematics\\
University of Waikato\\
Hamilton 3216\\
New Zealand.\\
email: tim.stokes@waikato.ac.nz

\end{document}